\documentclass[letterpaper,12pt]{article}
\usepackage{diagbox}
\usepackage{latexsym,amssymb,amsmath,bm}
\usepackage{mathtools}
\usepackage{amsthm}
\usepackage{lipsum,graphicx,multicol}
\usepackage{setspace,color}
\usepackage[sort&compress]{natbib}
\usepackage{tcolorbox}
\usepackage{subcaption}
\usepackage{float}
\usepackage[normalem]{ulem}
\usepackage{authblk}
\usepackage{leftidx}
\usepackage{dsfont}
\usepackage{geometry}
\usepackage{bbm}
\usepackage[title,titletoc]{appendix}
\usepackage{actuarialangle}
\usepackage{lscape}
\usepackage{afterpage}
\usepackage{relsize}
\usepackage[colorlinks=true,allcolors=blue, linkcolor=red]{hyperref}
\allowdisplaybreaks

\usepackage{lineno}
\usepackage{bbm}

\usepackage{accents}

\def \P {\mathbb{P}}

\def \E {\mathbb{E}}

\def \RR {\mathbb{R}}

\numberwithin{equation}{section} 
\newtheorem{theorem}{Theorem}[section]
\newtheorem{definition}[theorem]{Definition}

\newtheorem{example}[theorem]{Example}
\newtheorem{remark}[theorem]{Remark}
\newtheorem{proposition}[theorem]{Proposition}

\newtheorem{corollary}[theorem]{Corollary}

\newcommand{\N}{{N}}
\newcommand{\M}{{M}}
\newcommand{\J}{{J}}
\newcommand{\K}{{K}}
\newcommand{\I}{{I}}
\newcommand{\F}{\,\overline{\kern-0.175em F\kern-0em}}
\newcommand{\xb}{\,\overline{\kern-0.125em x\kern-0em}}
\newcommand{\Xb}{\,\overline{\kern-0.125em X\kern-0em}}


\definecolor{darkread}{rgb}{0.7, 0, 0}

\begin{document}
\doublespace
\title{\Large\textbf{{Joint Exclusivity}}}

\author[]{\small Nawaf Mohammed \thanks{\url{nawaf.mohammed.ac@gmail.com}}}

\affil[]{\footnotesize }
\date{}
\maketitle
\vspace{-1cm}
\begin{abstract}
We introduce \emph{joint exclusivity} (JE), a flexible extension of mutual exclusivity for non-negative random vectors that prohibits only the simultaneous positivity of all $n$ components. Unlike mutual exclusivity, JE admits a broad class of marginals while retaining a sharp existence criterion. We prove that a JE random vector with prescribed marginals $F_1,\ldots,F_n$ exists if and only if
\[
\sum_{i=1}^n \F_i(0)\le n-1.
\]
We also provide a canonical face-based construction with freely specified copulas within each face, and show that the trivariate case can elegantly be reduced to a one-parameter family. Finally, we extend the framework through Wang-type distortions and an $m$-exclusivity hierarchy interpolating between mutual exclusivity and JE.

\vspace{5cm}
\noindent\textit{Key words and phrases}: counter-monotonicity; joint exclusivity; mutual exclusivity; $m$-exclusivity; generalised joint exclusivity; joint mixability; distortion functions.
\end{abstract}

\newpage
{\color{black}
\section{Introduction}
\label{sec:intro}

Dependence modelling plays a central role in quantitative risk analysis, where the choice of joint distribution
profoundly shapes aggregate loss behaviour and the values of associated risk
measures~\citep{Lauzier2023, Deelstra2011, Dhaene2002a, Cheung2014a}. Among
all dependence structures, those of an extremal nature are of special importance:
they yield sharp bounds on risk measures and serve as natural benchmarks for
sensitivity analysis.

In the bivariate setting, the positive extreme is \emph{co-monotonicity}
\citep{Dhaene2002, Nelson2010}, which captures the notion that both components
always move in the same direction. Formally, $X_{12}=(X_1, X_2)$ is co-monotonic
if
\[
    X_{12} \stackrel{d}{=} \bigl(h_1(Z),\, h_2(Z)\bigr),
\]
for some random variable $Z$ and functions $h_1$, $h_2$ that are both
non-decreasing (or both non-increasing). This structure is characterised by the
Fr\'{e}chet upper bound
\[
    F_{12}(x_1, x_2) = \min\left\{F_1(x_1),\, F_2(x_2)\right\},
\]
where $F_{12}$ denotes the joint cumulative distribution function (CDF) of
$X_{12}$ and $F_1$, $F_2$ are the marginal distributions. Co-monotonicity
represents the strongest form of positive dependence: an increase in one
component almost surely does not decrease the other.

At the opposite extreme lies \emph{counter-monotonicity} \citep{Dhaene1999,
Nelson2010}, which captures perfect negative association. A bivariate vector
$X_{12}$ is counter-monotonic if
\[
    X_{12} \stackrel{d}{=} \bigl(h_1(Z),\, h_2(Z)\bigr),
\]
where $h_1$ is non-decreasing and $h_2$ is non-increasing (or vice versa), for
some random variable $Z$. Its joint distribution is given by the Fr\'{e}chet
lower bound
\[
    F_{12}(x_1, x_2) = \max\left\{F_1(x_1) + F_2(x_2) - 1,\; 0\right\}.
\]
In the bivariate case, both Fr\'{e}chet bounds define valid joint CDFs for
arbitrary marginals $F_1$ and $F_2$, so the two extremal dependence structures
are uniquely and unambiguously defined.

These two notions, however, behave very differently in higher dimensions. Let
$\N = \{1,\dots, n\}$ denote the index set. Co-monotonicity extends naturally to
the $n$-variate setting: for $X_{\N}= (X_1, \dots, X_n)$, the representation
\[
    X_{\N}\stackrel{d}{=} \bigl(h_1(Z),\, \dots,\, h_n(Z)\bigr),
\]
where all $h_i$ are non-decreasing (or all non-increasing), remains well-defined
for any $n \ge 2$. Moreover, the Fr\'{e}chet upper bound
\[
    F_{\N}(x_1, \dots, x_n) = \min_{i\in\N} F_i(x_i)
\]
continues to define a valid joint distribution for arbitrary marginals and any
$n \ge 2$.

Counter-monotonicity, by contrast, does not admit a canonical multivariate
extension. For $n \ge 3$, if two pairs
of components are each driven by opposing monotone functions of a common variable,
the induced relationship between the remaining pair is necessarily co-monotone,
yielding intricacies that are handled non-uniquely. Correspondingly, the multivariate
Fr\'{e}chet lower bound
\[
    F_{\N}(x_1, \dots, x_n)
        = \max\left\{\sum_{i\in\N} F_i(x_i) - (n-1),\; 0\right\}
\]
fails, in general, to define a valid joint distribution when $n \ge 3$. As a
result, several extensions of counter-monotonicity have been proposed in the
literature, each tailored to specific modelling contexts.

Among the strongest of these is \emph{mutual exclusivity} (ME) \citep{Dhaene1999,
Cheung2014}. Letting $\F$ denote the decumulative distribution function (DDF),
a non-negative random vector $X_{\N}$ is ME if
\begin{equation}
    \label{eq:MEdef}
    \F_{ij}(0,0) = 0, \qquad \text{for all } i<j,\quad i,j \in \N,
\end{equation}
that is, no two components are simultaneously positive. An ME random vector with
prescribed marginals $F_1, \dots, F_n$ exists if and only if
\begin{equation}
    \label{eq:MEcondition}
    \sum_{i\in\N}\F_i(0) \le 1.
\end{equation}
Whenever \eqref{eq:MEcondition} holds, the Fr\'{e}chet lower bound is a valid
joint distribution and coincides with that of the ME random vector. This
tractability is one of the principal appeals of ME: existence is fully determined
by the single inequality~\eqref{eq:MEcondition}, and the joint distribution is
directly given by the Fr\'{e}chet lower bound. However,
condition~\eqref{eq:MEcondition} is rather restrictive---it requires that the
aggregate probability of any component being strictly positive does not exceed
unity---which considerably limits the practical applicability of ME.

Alternative notions of counter-monotonicity have been introduced to overcome this
limitation, including joint mixability \citep{Wang2016}, $d$-counter-monotonicity
\citep{Lee2014}, and $\Sigma$-counter-monotonicity; see \cite{Puccetti2015} for a
unified treatment. These approaches typically impose that a prescribed function of
the components is almost surely constant, thereby permitting a broader class of
marginals. However, they are often difficult to construct explicitly or to
characterise completely for a given set of marginals. This stands in sharp
contrast to ME, whose existence is fully determined by condition~\eqref{eq:MEcondition}
and whose joint distribution is directly given by the Fr\'{e}chet lower bound.
This observation motivates the following question: \emph{does there exist a notion
of counter-monotonicity that is more flexible than ME, yet remains amenable to
explicit construction and admits a complete, tractable characterisation analogous
to~\eqref{eq:MEcondition}?}

A natural answer emerges by relaxing the pairwise requirement
in~\eqref{eq:MEdef}. Rather than prohibiting any pair of components from being
simultaneously positive, one may instead require only that all $n$ components are
never simultaneously positive. This leads to the following definition.

\begin{definition}
\label{def:JE}
A non-negative random vector $X_{\N}$ is said to be \emph{jointly exclusive} (JE)
if
\begin{equation}
    \label{eq:JEdef}
    \F_{\N}(0,\dots,0) = 0,
    \text{ or equivalently, }
    \prod_{i\in\N}X_i=0\text{ almost surely},
\end{equation}
that is, its support is contained in $[0,\infty)^n \setminus (0,\infty)^n$.
\end{definition}

\begin{figure}[htbp]
\centering
\includegraphics[width=\textwidth]{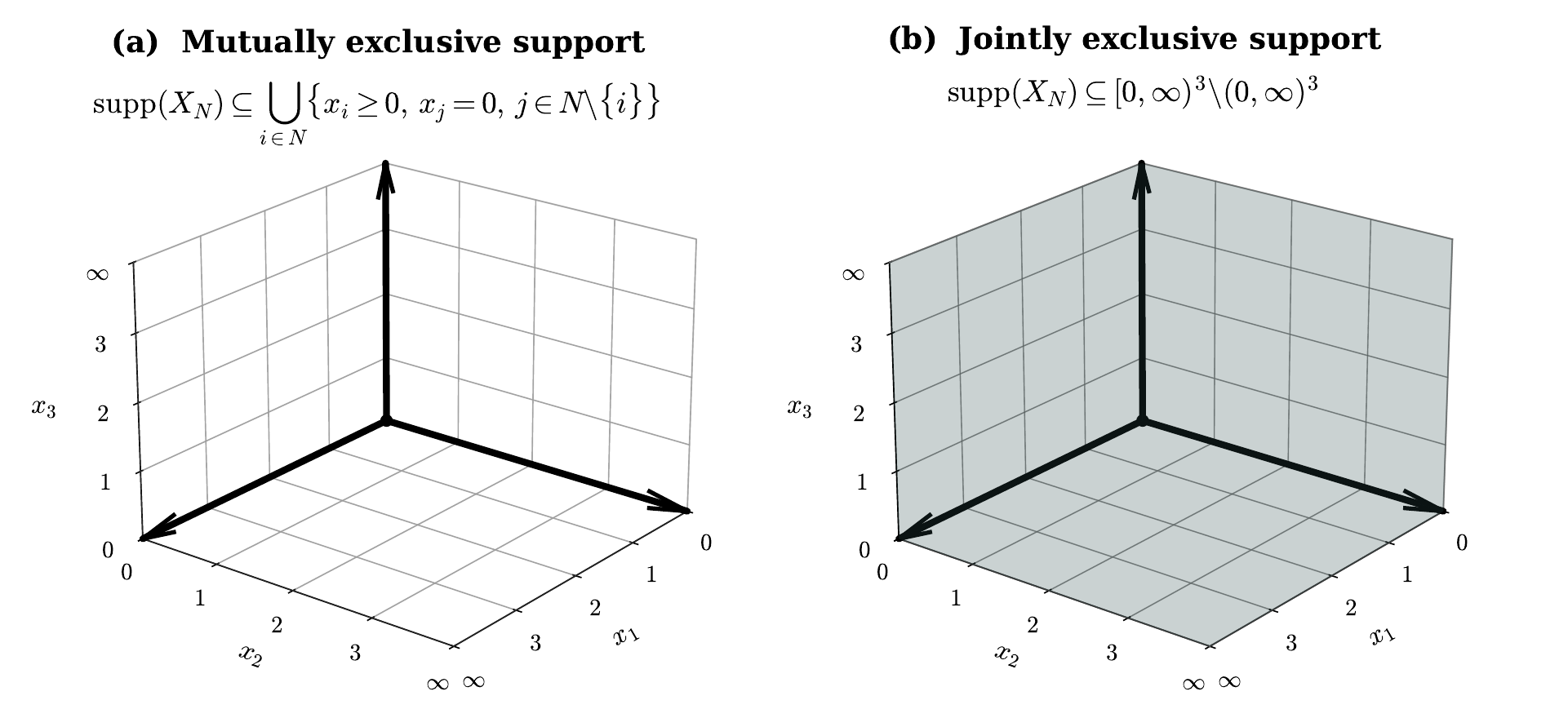}
\caption{Comparison of the supports of ME and JE random vectors for $n=3$.}
\label{fig:MEvsJE}
\end{figure}

When $n = 2$, conditions \eqref{eq:MEdef} and \eqref{eq:JEdef} coincide, so ME
and JE are equivalent. For $n \ge 3$, ME implies JE but not conversely, making JE
a strictly more flexible notion; see Figure~\ref{fig:MEvsJE} for an illustration
of the distinction between the two supports when $n = 3$. As we demonstrate in
this paper, JE additionally admits a canonical construction and a complete
characterisation of the admissible marginals, thereby providing an affirmative
answer to the question above.

The remainder of the paper is organised as follows. Section~\ref{sec:preliminaries}
introduces the notation and presents basic properties of JE.
Section~\ref{sec:main_results} develops the main construction methods and
characterisation results. Further remarks on the reflected and translated variants
of JE as well as its connection to joint mixability are discussed in
Section~\ref{sec:further_discussion}. Section~\ref{sec:conclusions} concludes.
\section{Preliminaries}
\label{sec:preliminaries}

Throughout the paper we work with the index set $\N=\{1,\dots,n\}$. Define
$\M=\{\I\subset \N : 2\le |\I|\le n-1\}$ as the collection of all proper subsets
of $\N$ of intermediate cardinality, and for each $i\in\N$ let
$\M_i=\{\I\in \M : i\in \I\}$ denote those containing $i$; the latter family
arises naturally in the constructions of Section~\ref{sec:main_results}. We
primarily consider non-negative random vectors $X_{\N}=(X_1,\dots,X_n)$, with
sub-vectors $X_{\I}=(X_i,\,i\in\I)$ for $\I\subseteq \N$, and write
$x_{\N}=(x_1,\dots,x_n)\in\RR^n$ and $x_{\I}=(x_i,\,i\in\I)$ for their
deterministic counterparts. We use $F$, $\F$, $f$, and $\varphi$ to denote the
CDF, DDF, density function, and characteristic function, respectively, with
subscripts identifying the corresponding vector or sub-vector; for example,
$F_{\N}$ and $\F_{\N}$ denote the CDF and DDF of $X_{\N}$, and analogously for
$F_{\I}$ and $\F_{\I}$.

By Definition~\ref{def:JE} and as illustrated in Figure~\ref{fig:MEvsJE}, the
support of a JE random vector is contained in
$\bigcup_{i\in\N}\{x\in[0,\infty)^n : x_i=0\}$, a union of $n$ coordinate
hyperplanes of dimension $n-1$ within $[0,\infty)^n$. The joint distribution
therefore assigns no $n$-dimensional Lebesgue mass and is entirely characterised
by its lower-dimensional marginals, as formalised below.

\begin{proposition}
\label{prop:JECDF}
A non-negative random vector $X_{\N}$ is JE if and only if its CDF satisfies
\begin{equation}
\label{eq:JECDF}
F_{\N}(x_{\N})=\begin{cases}
1-\sum\limits_{i\in\N}\F_i(x_i)+\sum\limits_{\I\in\M}(-1)^{|\I|}\F_{\I}(x_{\I})\\[4pt]
\hspace*{0.65cm}=(-1)^{n-1}\left(1-\sum\limits_{i\in\N}F_i(x_i)
+\sum\limits_{\I\in\M}(-1)^{|\I|}F_{\I}(x_{\I})\right), & x_{\N}\in[0,\infty)^n, \\[8pt]
0, & \text{otherwise,}
\end{cases}
\end{equation}
and if and only if its characteristic function satisfies
\begin{equation}
\label{eq:JECF}
\varphi_{\N}(t_{\N})
= (-1)^{n-1}\left(1-\sum_{i\in\N}\varphi_i(t_i)
+\sum_{\I\in\M}(-1)^{|\I|}\,\varphi_{\I}(t_{\I})\right),
\quad t_{\N}\in\RR^n.
\end{equation}
\end{proposition}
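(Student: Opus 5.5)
The plan is to prove all three descriptions equivalent through one inclusion–exclusion identity. For a non-negative vector and $x_{\N}\in[0,\infty)^n$, put $A_i=\{X_i>x_i\}$. Then $F_{\N}(x_{\N})=\P\bigl(\bigcap_i A_i^c\bigr)$, and inclusion–exclusion gives
\[
F_{\N}(x_{\N})=1-\sum_{i\in\N}\F_i(x_i)+\sum_{\I\in\M}(-1)^{|\I|}\F_{\I}(x_{\I})+(-1)^n\F_{\N}(x_{\N}),
\]
where $\M$ collects exactly the intermediate cardinalities. So the first expression in \eqref{eq:JECDF} is valid for all $x_{\N}\ge 0$ if and only if $\F_{\N}(x_{\N})=0$ for all such $x_{\N}$.

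Since $\F_{\N}$ is non-increasing in each argument, this holds if and only if $\F_{\N}(0,\dots,0)=0$, which is exactly \eqref{eq:JEdef}. For the forward direction, JE gives $\F_{\N}(x_{\N})\le\F_{\N}(0,\dots,0)=0$. For the converse, evaluate the identity at $x_{\N}=0$. The "otherwise" branch is automatic because $X_{\N}$ is non-negative.

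To obtain the second expression, apply the same identity to the events $B_i=\{X_i\le x_i\}$:
\[
\F_{\N}(x_{\N})=\P\Bigl(\bigcap_i B_i^c\Bigr)=1-\sum_i F_i+\sum_{\I\in\M}(-1)^{|\I|}F_{\I}+(-1)^nF_{\N}.
\]
Setting $\F_{\N}=0$ and solving for $F_{\N}$ yields the factor $(-1)^{n-1}$. This shows that the two displayed forms agree under JE. Conversely, if the second form holds for all $x_{\N}\ge 0$, the identity forces $\F_{\N}(x_{\N})=0$, so the vector is JE.

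For the characteristic function, the key step is to write $e^{it_jX_j}=1+(e^{it_jX_j}-1)$, whose second term vanishes on $\{X_j=0\}$. Expanding the product gives the pointwise identity
\[
\prod_{j}\bigl(e^{it_jX_j}-1\bigr)=\sum_{\I\subseteq\N}(-1)^{n-|\I|}e^{i\langle t_{\I},X_{\I}\rangle}.
\]
Taking expectations, with $\varphi_\emptyset=1$, gives
\[
\E\prod_j\bigl(e^{it_jX_j}-1\bigr)=\varphi_{\N}+(-1)^{n}\Bigl(1-\sum_i\varphi_i+\sum_{\I\in\M}(-1)^{|\I|}\varphi_{\I}\Bigr),
\]
after collecting signs $(-1)^{n-|\I|}=(-1)^n(-1)^{|\I|}$. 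Under JE the product on the left vanishes almost surely, because some $X_j=0$. This gives \eqref{eq:JECF}.

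The main obstacle is the converse for the characteristic function: \eqref{eq:JECF} yields $\E\prod_j(e^{it_jX_j}-1)=0$ for all $t_{\N}$, and we must deduce $\P(\prod_jX_j>0)=0$. The first route is measure-theoretic. The left side is the Fourier transform of the signed measure $\mu=\sum_{\I}(-1)^{n-|\I|}\mu_{\I}$, where $\mu_{\I}$ is the law of $X_{\N}$ with the coordinates outside $\I$ set to $0$. By Fourier uniqueness for finite signed measures, $\mu=0$. The projection map sending coordinates outside $\I$ to $0$ fixes no point of $(0,\infty)^n$ unless $\I=\N$. Hence the restriction of $\mu$ to $(0,\infty)^n$ equals the law of $X_{\N}$ there, and so $\P(X_{\N}\in(0,\infty)^n)=0$.

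If that route proves awkward, the fallback is to note that \eqref{eq:JECF} together with the CF inclusion–exclusion determines $\varphi_{\N}$ from the lower-dimensional $\varphi_{\I}$. This is the same relation, with the same signs, as the one the CDF formula expresses for $F_{\N}$ in terms of the $F_{\I}$. Uniqueness of characteristic functions would then give the CDF characterisation, and hence JE by the first part.
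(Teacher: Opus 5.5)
Your argument is correct. For the CDF part and the forward direction of \eqref{eq:JECF}, it is the paper's proof: inclusion--exclusion on the events $\{X_i>x_i\}$ and its dual on $\{X_i\le x_i\}$, then expansion of $\prod_{j}(\mathrm{e}^{\mathrm{i}t_jX_j}-1)$, which vanishes almost surely under JE.

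The genuine difference is the converse for \eqref{eq:JECF}. The paper handles it in one sentence (``uniqueness theorem for characteristic functions together with the CDF equivalence''), and that is essentially your fallback. Read the right-hand side of \eqref{eq:JECF} as the transform of the corresponding signed combination of lower-dimensional laws. Uniqueness then gives the second form of \eqref{eq:JECDF}, and the CDF part finishes.

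Your primary route never passes through the CDF. You write $\varphi_\I(t_\I)$ as the characteristic function on $\RR^n$ of $\pi_\I(X_\N)$. Then $\sum_{\I\subseteq\N}(-1)^{n-|\I|}\mu_\I$ has identically zero Fourier transform, so it is the zero measure. Evaluating it on $(0,\infty)^n$ leaves only the $\I=\N$ term, which gives $\P(X_\N\in(0,\infty)^n)=0$ directly.

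This route is self-contained, and it makes explicit two points the paper's sentence leaves implicit:
\begin{itemize}
\item Uniqueness is being applied to a signed combination, not to the characteristic function of a probability law. Write it as the difference of the positive measures with $n-|\I|$ even and odd; these then share a Fourier transform, so ordinary uniqueness applies.
\item The lower-dimensional laws have to be embedded in $\RR^n$ via $\pi_\I$ before they can be compared with the law of $X_\N$.
\end{itemize}
The paper's route is shorter once the CDF result is available.

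One wording correction. For $\I\neq\N$, what you actually use is that the \emph{image} of $\pi_\I$ is disjoint from $(0,\infty)^n$, since every $\pi_\I(x)$ has a zero coordinate; hence $\mu_\I((0,\infty)^n)=0$. Saying that $\pi_\I$ ``fixes no point of $(0,\infty)^n$'' is weaker than what the step needs.
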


\begin{proof}
\textit{CDF representation.}
For $x_{\N}\in[0,\infty)^n$, the inclusion--exclusion principle yields
\begin{equation}
\label{eq:IEidentityDDF}
F_{\N}(x_{\N}) = \sum_{J\subseteq\N}(-1)^{|J|}\F_J(x_J), \qquad \F_\emptyset\equiv 1.
\end{equation}
Under JE, non-negativity gives $\F_{\N}(x_{\N})=\P(X_i>x_i,\,\forall
i\in\N)\le \P\bigl(\prod_{i\in\N}X_i>0\bigr)=0$ for all $x_\N\in[0,\infty)^n$.
Dropping the $J=\N$ term from~\eqref{eq:IEidentityDDF} yields the first expression
in~\eqref{eq:JECDF}. Conversely, if $F_\N$ satisfies this expression, note that
it equals $\sum_{J\subset\N}(-1)^{|J|}\F_J(x_J)$; substituting into the
right-hand side of~\eqref{eq:IEidentityDDF} then gives $(-1)^n\F_\N(x_\N)=0$,
confirming JE. For $x_\N\notin[0,\infty)^n$, $F_\N(x_\N)=0$ follows immediately
from non-negativity.

For the second expression, the dual inclusion--exclusion identity
$\F_{\N}(x_\N)=\sum_{J\subseteq\N}(-1)^{|J|}F_J(x_J)$, combined with
$\F_\N(x_\N)=0$, gives $(-1)^nF_\N(x_\N)=-\sum_{J\subset\N}(-1)^{|J|}F_J(x_J)$,
from which
$F_\N(x_\N)=(-1)^{n-1}\sum_{J\subset\N}(-1)^{|J|}F_J(x_J)$,
yielding the second expression.

\textit{Characteristic function representation.}
Under JE, $\prod_{i\in\N}X_i=0$ a.s., so for any $t_\N\in\RR^n$ at least one
factor satisfies $\mathrm{e}^{\mathrm{i}t_i\cdot 0}-1=0$, giving
$\prod_{i\in\N}(\mathrm{e}^{\mathrm{i}t_iX_i}-1)=0$ a.s.
Expanding via the identity
$\prod_{i\in\N}(c_i-1)=\sum_{J\subseteq\N}(-1)^{n-|J|}\prod_{j\in J}c_j$
and taking expectations yields
$\sum_{J\subseteq\N}(-1)^{n-|J|}\varphi_J(t_J)=0$.
Isolating the $J=\N$ term and rearranging gives~\eqref{eq:JECF}. The converse
follows from the uniqueness theorem for characteristic functions together with the
CDF equivalence established above.
\end{proof}

\begin{remark}
\label{rm:JECDFrm}
The representation~\eqref{eq:JECDF} illuminates the structural distinction between
JE and ME. Under ME, no two components are simultaneously positive, so
$\F_{\I}(x_\I)=0$ for every $\I\in\M$ and $x_\I\in[0,\infty)^{|\I|}$. The
interaction terms
\begin{equation}
\label{eq:MECDF_Example}
\sum_{\I\in\M}(-1)^{|\I|}\F_{\I}(x_{\I})
\end{equation}
therefore vanish identically, and the CDF reduces to
\[
F_{\N}(x_{\N})=\begin{cases}
1-\sum\limits_{i\in\N}\F_i(x_i)
=\sum\limits_{i\in\N}F_i(x_i)-(n-1), & x_{\N}\in[0,\infty)^n,\\[4pt]
0, & \text{otherwise,}
\end{cases}
\]
which coincides with the Fr\'{e}chet lower bound and defines a valid CDF under
condition~\eqref{eq:MEcondition}. Under JE, by contrast, the interaction
terms~\eqref{eq:MECDF_Example} need not vanish, reflecting the richer dependence
structure that permits components to be simultaneously positive in pairs or smaller
sub-groups while still satisfying the global JE constraint.
\end{remark}
%+++++++++++++++++++++++++++++++++++++++++++++++++++
\section{Main Results}
\label{sec:main_results}

We begin by establishing the fundamental characterisation of JE in terms of the
admissible marginal distributions.

\begin{theorem}
\label{thm:JE_Characterisation_1}
Let $F_1,\dots,F_n$ be marginal distributions on $[0,\infty)$. A JE random
vector $X_{\N}$ with marginals $F_1,\dots,F_n$ exists if and only if
\begin{equation}
\label{eq:JECondition}
\sum_{i\in\N}\F_i(0)\le n-1.
\end{equation}
\end{theorem}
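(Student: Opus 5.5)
The plan is to reduce the statement to the indicator vector $B_\N=(B_1,\dots,B_n)$ with $B_i=\mathbbm{1}\{X_i>0\}$, and to reassemble $X_\N$ afterwards from $B_\N$ and conditional positive parts.

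\emph{Necessity.} Suppose $X_\N$ is JE with marginals $F_1,\dots,F_n$. Since $\prod_{i\in\N}X_i=0$ almost surely, at least one $B_i$ equals zero almost surely, so $\sum_{i\in\N}B_i\le n-1$ almost surely. Taking expectations gives $\sum_{i\in\N}\F_i(0)=\sum_{i\in\N}\E[B_i]\le n-1$. Equivalently, the union bound gives $\P(\exists i: X_i=0)\le\sum_{i\in\N}F_i(0)$, and this probability must equal $1$.

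\emph{Sufficiency.} The construction has two stages. Write $p_i=\F_i(0)$ and assume $\sum_{i\in\N}p_i\le n-1$.

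First, I will build Bernoulli variables $B_i\sim\mathrm{Bern}(p_i)$ that are never all equal to one. I will use the complementary variables $A_i=1-B_i\sim\mathrm{Bern}(q_i)$, where $q_i=F_i(0)$ and $\sum_{i\in\N}q_i\ge 1$. The requirement is that the events $\{A_i=1\}$ cover the whole probability space. I take $U\sim\mathrm{Unif}(0,1)$ and lay intervals of lengths $q_1,\dots,q_n$ consecutively around the circle $\RR/\mathbb{Z}$, starting at $0$. Let $\{A_i=1\}$ be the event that $U$ falls in the $i$-th arc. Each arc has length $q_i\le 1$, so it is a genuine arc of measure $q_i$. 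Because the total length is at least $1$, the arcs cover the circle. Hence at least one $A_i=1$ almost surely, and therefore $\prod_{i\in\N}B_i=0$ almost surely.

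Second, I attach the positive parts. For each $i$ with $p_i>0$, let $G_i$ be the law of $X_i$ conditional on $X_i>0$, that is, $G_i(x)=(F_i(x)-F_i(0))/p_i$ for $x\ge0$. Take $Y_1,\dots,Y_n$ independent with $Y_i\sim G_i$, independent of $U$, and set $X_i=B_iY_i$ (with $X_i\equiv0$ if $p_i=0$). Then $\P(X_i\le x)=(1-p_i)+p_iG_i(x)=F_i(x)$ for $x\ge0$, so the marginals are correct. Also $\prod_{i\in\N}X_i=\prod_{i\in\N}B_i\prod_{i\in\N}Y_i=0$ almost surely. So $X_\N$ is JE, which completes the proof.

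The only delicate step is the first stage of sufficiency: the covering construction for the Bernoulli vector, specifically checking that the wrap-around arcs have the correct measure and jointly cover $[0,1)$. This follows from $q_i\le1$ and $\sum_{i\in\N}q_i\ge1$. Since the joint law of $(Y_1,\dots,Y_n)$ is left free, any copula could be used among the positive parts; this freedom foreshadows the face-based construction mentioned in the abstract.
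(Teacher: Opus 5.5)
Your proof is correct, and it takes a genuinely different route from the paper's. Your necessity argument (take expectations in $\sum_{i\in\N}B_i\le n-1$) is equivalent to the paper's Fr\'echet lower bound on $\F_\N(0,\dots,0)$, since both are the union bound; the paper itself uses your counting argument for Theorem~\ref{thm:m-exclusivity}.

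The real difference is in sufficiency. The paper proceeds in three stages:
\begin{itemize}
\item it assigns masses $p_\I$ to the open faces $E_\I$, $\I\in\M$;
\item it solves for the axis and origin masses from the marginal constraints;
\item it proves the resulting linear inequalities are feasible via LP strong duality plus an extreme-point analysis of the dual polytope. The claim that $r^*$ and the permutations of $(1,\dots,1,0)$ are the minimising vertices is stated there rather than derived.
\end{itemize}

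You bypass all of this by noting that existence is a question about the indicator vector alone. The condition $\sum_i F_i(0)\ge1$ is exactly what is needed for events of probabilities $F_i(0)$ to cover the sample space. Your consecutive-arc sweep builds such a cover explicitly, and independent conditional positive parts then fix the marginals.

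This argument is shorter and fully self-contained, and it gains in two further ways:
\begin{itemize}
\item It makes the duality with ME transparent: ME needs the events $\{X_i>0\}$ to be disjoint, while JE needs the events $\{X_i=0\}$ to cover the space.
\item The same sweep also proves Theorem~\ref{thm:m-exclusivity}. Arcs of length at most one laid end to end cover every point of the circle at least $\lfloor\sum_i F_i(0)\rfloor$ times. Hence at most $n-\lfloor\sum_i F_i(0)\rfloor\le m-1$ of the $B_i$ equal one whenever $\sum_i\F_i(0)\le m-1$.
\end{itemize}

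What the paper's route buys is a description of the whole canonical family. That family consists of every admissible face-mass vector $\{p_\I\}$ together with freely chosen, mutually inconsistent face copulas $C_\I$. The paper later exploits this for the trivariate one-parameter family, and for G-JE, where $G_i^*$ simply replaces $\F_i(0)$ in the same LP. Your construction yields only one member of that family: its face masses are dictated by the arc layout, and its within-face laws are all margins of the single joint law of $(Y_1,\dots,Y_n)$.
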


\begin{proof}
\textit{Necessity.}
Suppose that $X_{\N}$ is JE with marginals $F_1,\dots,F_n$. By
Definition~\ref{def:JE}, $\F_{\N}(0,\dots,0)=0$. The Fr\'{e}chet lower bound
for the joint DDF gives
\[
0 \le \max\left\{\sum_{i\in\N}\F_i(0)-(n-1),\;0\right\}
    \le \F_{\N}(0,\dots,0)=0,
\]
from which $\sum_{i\in\N}\F_i(0)\le n-1$ follows immediately.

\medskip
\noindent\textit{Sufficiency.}
Assume that~\eqref{eq:JECondition} holds. We construct a valid JE distribution
with marginals $F_1,\dots,F_n$ by assigning probability mass to each piece of the
partition
\[
[0,\infty)^n\setminus(0,\infty)^n
= \{0_{\N}\}
  \cup\bigcup_{i\in\N} A_i
  \cup\bigcup_{\I\in\M} E_{\I},
\]
where $0_{\N}$ denotes the origin, $A_i=\{x_{\N}:x_i>0,\,x_j=0\ \forall j\ne i\}$
is the $i$-th open semi-axis, and
\[
E_{\I}=\{x_{\N}: x_i>0\ \forall i\in\I,\;x_j=0\ \forall j\in\N\setminus\I\}
\]
is the open face indexed by $\I\in\M$. These sets are mutually disjoint.

\smallskip
\noindent\textit{Step~1: Intermediate faces.}
For each $\I\in\M$ and $x_{\I}\in[0,\infty)^{|\I|}$, set
\begin{equation}
\label{eq:JEProbFaces}
\P\left(X_i>x_i,\;i\in \I;\;X_j=0,\;j\in\N\setminus \I\right)
= p_{\I}\,C_{\I}\left(\frac{\F_i(x_i)}{\F_i(0)},\;i\in\I\right),
\end{equation}
where $p_{\I}\ge 0$ is the total probability mass assigned to face $E_\I$ and
$C_{\I}$ is an arbitrary copula on $[0,1]^{|\I|}$ \citep{Nelson2010}. The
argument $\frac{\F_i(x_i)}{\F_i(0)}\in[0,1]$ is the survival function of $X_i$
conditional on $X_i>0$; when $\F_i(0)=0$ for some $i\in\I$, the
constraint~\eqref{eq:JE_parameters_1} forces $p_\I=0$, so the formula need only
be evaluated when all denominators are strictly positive. Since the
faces $\{E_{\I}:\I\in\M\}$ are disjoint, the copulas $\{C_\I:\I\in\M\}$ may be
chosen independently; in particular, $C_{\I_1}$ need not be a marginal of
$C_{\I_2}$ even when $\I_1\subset\I_2$.

\smallskip
\noindent\textit{Step~2: Coordinate axes.}
The marginal constraint $\P(X_i>x_i)=\F_i(x_i)$ must hold for each $i\in\N$.
The contribution of face $E_{\I}$ (for $\I\ni i$) to $\P(X_i>x_i)$ is obtained by
setting $x_k=0$ (and hence $\frac{\F_k(x_k)}{\F_k(0)}=1$) for all
$k\in\I\setminus\{i\}$ in~\eqref{eq:JEProbFaces}, which by the copula
marginalisation property $C(u,1,\dots,1)=u$ yields $p_\I\cdot\frac{\F_i(x_i)}{\F_i(0)}$.
Summing these face contributions over all $\I\in\M_i$ and requiring equality with
$\F_i(x_i)$ uniquely determines the axis mass:
\begin{equation}
\label{eq:JEProbAxes}
\P\left(X_i>x_i;\;X_j=0,\;j\in\N\setminus\{i\}\right)
=\F_i(x_i)\left(1-\frac{\displaystyle\sum_{\I\in \M_i}p_{\I}}{\F_i(0)}\right),
\quad x_i\in[0,\infty).
\end{equation}
This expression defines a valid (non-negative, non-increasing) survival function
for the $i$-th axis component if and only if
\begin{equation}
\label{eq:JE_parameters_1}
\sum_{\I\in \M_i}p_{\I}\le\, \F_i(0), \qquad i\in\N.
\end{equation}

\smallskip
\noindent\textit{Step~3: Origin.}
The total probability mass on all axes equals
$\sum_{i\in\N}\bigl[\F_i(0)-\sum_{\I\in\M_i}p_\I\bigr]
=\sum_{i\in\N}\F_i(0)-\sum_{\I\in\M}|\I|\,p_\I$,
and the total mass on all faces equals $\sum_{\I\in\M}p_\I$.
Subtracting from unity, the origin receives mass
\begin{equation}
\label{eq:JEProbOrigin}
\P\left(X_i=0,\;i\in\N\right)
=1-\sum_{i\in\N}\F_i(0)+\sum_{\I\in\M}(|\I|-1)\,p_{\I},
\end{equation}
which is non-negative if and only if
\begin{equation}
\label{eq:JE_parameters_2}
\sum_{\I\in\M}(|\I|-1)\,p_{\I}\ge \sum_{i\in\N}\F_i(0)-1.
\end{equation}
One can verify that the total probability sums to unity under this
assignment. The construction therefore yields a valid JE distribution if and
only if there exist coefficients $p_\I\ge 0$ satisfying both
\eqref{eq:JE_parameters_1} and~\eqref{eq:JE_parameters_2}.

\smallskip
\noindent\textit{Step~4: Feasibility via linear programming.}
The question reduces to whether the maximum of $\sum_{\I\in\M}(|\I|-1)p_\I$
over the feasible set defined by~\eqref{eq:JE_parameters_1} is at least
$\sum_{i\in\N}\F_i(0)-1$. Consider the primal linear program
\begin{gather}
\label{sys:original}
\text{maximise}\quad\sum_{\I\in\M}(|\I|-1)p_{\I}, \\
\nonumber
\text{subject to}\quad p_{\I}\ge 0,\quad
\sum_{\I\in \M_i}p_{\I}\le\, \F_i(0),\quad i\in\N,
\end{gather}
with dual problem
\begin{gather}
\label{sys:dual}
\text{minimise}\quad\sum_{i\in\N}\F_i(0)\,r_i, \\
\nonumber
\text{subject to}\quad r_i\ge 0,\quad
\sum_{i\in \I}r_i\ge |\I|-1,\quad \I\in\M.
\end{gather}
Both programs are feasible: $p_\I=0$ is primal feasible, and setting $r_i=1$
for all $i$ is dual feasible, since $|\I|\ge|\I|-1$ for every $\I\in\M$. By
strong duality, the optimal values of~\eqref{sys:original} and~\eqref{sys:dual}
coincide, so the sufficiency condition becomes
\[
\min_{r\text{ dual-feasible}}\sum_{i\in\N}\F_i(0)\,r_i\ge\sum_{i\in\N}\F_i(0)-1.
\]

The dual minimum is attained at a vertex of the feasible polytope. An analysis
of the extreme points of the dual polytope reveals that the minimum of the dual
objective is achieved either at the symmetric point
$r^*=\left(\frac{n-2}{n-1},\dots,\frac{n-2}{n-1}\right)$
or at one of the $n$ permutations of $(1,\dots,1,0)$, with the zero in
position~$j$. Feasibility of both candidates is immediate: at $r^*$,
$\sum_{i\in\I}r_i^*=|\I|\cdot\frac{n-2}{n-1}\ge|\I|-1$ holds for all
$\I\in\M$ because $|\I|\le n-1$; at $(1,\dots,1,0_j,1,\dots,1)$,
the dual constraint evaluates to $|\I|$ or $|\I|-1$ according to whether
$j\notin\I$ or $j\in\I$, both of which satisfy $\ge|\I|-1$. The corresponding
objective values are $\frac{n-2}{n-1}\sum_{i\in\N}\F_i(0)$ and
$\sum_{i\in\N}\F_i(0)-\F_j(0)$; minimising the latter over $j$ gives
$\sum_{i\in\N}\F_i(0)-\max_{i\in\N}\F_i(0)$. Hence
\begin{equation}
\label{eq:JE_minimum_sol}
\min_{r\text{ dual-feasible}}\sum_{i\in\N}\F_i(0)\,r_i
=\min\left\{\frac{n-2}{n-1}\sum_{i\in\N}\F_i(0),\;
\sum_{i\in\N}\F_i(0)-\max_{i\in\N}\F_i(0)\right\}.
\end{equation}

It remains to verify that this minimum is at least $\sum_{i\in\N}\F_i(0)-1$ as \eqref{eq:JE_parameters_2} dictates.
If the minimum in~\eqref{eq:JE_minimum_sol} is attained by the second term, the
requirement reduces to $\max_{i\in\N}\F_i(0)\le 1$, which holds trivially since
each $\F_i(0)$ is a probability. If it is attained by the first term, the
requirement becomes
\[
\frac{n-2}{n-1}\sum_{i\in\N}\F_i(0)\ge\sum_{i\in\N}\F_i(0)-1,
\]
which simplifies to $\sum_{i\in\N}\F_i(0)\le n-1$, precisely
condition~\eqref{eq:JECondition}. In both cases, \eqref{eq:JECondition} is
sufficient to guarantee the existence of feasible coefficients $p_\I$, and the
construction in Steps~1--3 defines a valid JE random vector with the prescribed
marginals $F_1,\dots,F_n$.
\end{proof}

%+++++++++++++++++++++++++++++++++++++++++++++++++++
The necessary and sufficient condition~\eqref{eq:JECondition} for JE is
significantly less restrictive than its ME counterpart~\eqref{eq:MEcondition},
and the gap widens with increasing dimension $n$. Equivalently in terms of
the marginal CDFs, the JE condition reads
\[
\sum_{i\in\N} F_i(0) \ge 1,
\]
while ME requires the substantially stronger
\[
\sum_{i\in\N} F_i(0) \ge n-1.
\]
Under ME, the constraint $\sum_{i\in\N}\F_i(0)\le 1$ precludes any marginal from
being free of an atom at zero unless all other components are almost surely zero:
if $\F_{i_0}(0)=1$ for some $i_0\in\N$, then $\sum_{j\ne i_0}\F_j(0)\le0$,
forcing $X_j=0$ almost surely for every $j\ne i_0$. Hence, for non-degenerate
marginals, ME requires $F_i(0)>0$ for every $i\in\N$. Under JE, by contrast,
few marginals need carry atoms at zero. If $n-2$ marginals are fully supported on
$(0,\infty)$, each contributes $\F_i(0)=1$, so condition~\eqref{eq:JECondition}
reduces to
\[
\F_{i_1}(0)+\F_{i_2}(0)\le 1,
\quad\text{equivalently}\quad
F_{i_1}(0)+F_{i_2}(0)\ge 1,
\]
for the two remaining marginals. Hence, as few as two marginals need carry atoms
at zero, subject to the mild joint condition that their point masses at zero sum
to at least one, while the remaining $n-2$ marginals may be fully supported on
$(0,\infty)$.

\medskip

To further investigate the structural properties of JE, we adopt the construction
introduced in the proof of Theorem~\ref{thm:JE_Characterisation_1} as a canonical
model.

\begin{definition}
\label{def:JECanonical}
A random vector $X_{\N}$ is called a \emph{canonical JE random vector} if its
distribution is specified by \eqref{eq:JEProbFaces}, \eqref{eq:JEProbAxes}, and \eqref{eq:JEProbOrigin} for some coefficients $p_{\I}\in[0,1]$, $\I\in\M$, satisfying
\eqref{eq:JE_parameters_1} and~\eqref{eq:JE_parameters_2}.
\end{definition}

For fixed marginals $F_1,\dots,F_n$, if condition~\eqref{eq:JECondition} holds,
Theorem~\ref{thm:JE_Characterisation_1} guarantees the existence of a canonical JE
random vector. Its CDF admits the following explicit form, as a direct consequence
of Proposition~\ref{prop:JECDF}.

\begin{corollary}
\label{cor:JE_Canonical_CDF_phi}
A random vector $X_{\N}$ is canonical JE if and only if its joint CDF is given by
\begin{equation}
\label{eq:JE_Canonical_CDF}
F_{\N}(x_{\N})=\begin{cases}
{\displaystyle
1-\sum_{i\in\N}\F_i(x_i)
+\sum_{\I\in\M}(-1)^{|\I|}
\sum_{\substack{\J\in\M\\ \I\subseteq \J}}
p_{\J}\,C_{\J}\left(\frac{\F_i(x_i)}{\F_i(0)},\,i\in\I;\;1,\,j\in \J\setminus \I\right)},
\\[10pt]
\hspace{10.5cm} x_{\N}\in[0,\infty)^n, \\[6pt]
0, \hspace{9.45cm} \text{otherwise.}
\end{cases}
\end{equation}
\end{corollary}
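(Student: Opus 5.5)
The plan is to apply the first expression of \eqref{eq:JECDF} in Proposition~\ref{prop:JECDF}. The canonical construction puts no mass on $(0,\infty)^n$, so every canonical JE vector is JE. Its CDF is therefore $1-\sum_i\F_i(x_i)+\sum_{\I\in\M}(-1)^{|\I|}\F_\I(x_\I)$ on $[0,\infty)^n$, and $0$ elsewhere. The proof then comes down to computing the lower-dimensional joint DDFs $\F_\I(x_\I)$, for $\I\in\M$ and $x_\I\in[0,\infty)^{|\I|}$, from the face assignments \eqref{eq:JEProbFaces}.

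\emph{Forward direction.} Fix $\I\in\M$ and $x_\I\ge 0$. The event $\{X_i>x_i,\ i\in\I\}$ requires $X_i>0$ for all $i\in\I$. Hence the outcome cannot lie at the origin. It can lie on an axis $A_k$ only if $|\I|=1$, which is excluded since $|\I|\ge2$. It can lie on a face $E_\J$ only if $\I\subseteq\J$, and $\J=\N$ is impossible by JE. Thus
\[
\F_\I(x_\I)=\sum_{\J\in\M,\ \J\supseteq\I}\P\bigl(X_i>x_i,\ i\in\I;\ X_k>0,\ k\in\J\setminus\I;\ X_j=0,\ j\notin\J\bigr).
\]
Each summand is \eqref{eq:JEProbFaces} for face $\J$, evaluated with $x_k=0$ for $k\in\J\setminus\I$. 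Since $\F_k(0)/\F_k(0)=1$, this gives $p_\J C_\J\bigl(\F_i(x_i)/\F_i(0),\,i\in\I;\ 1,\,k\in\J\setminus\I\bigr)$. Substituting into the CDF expression yields \eqref{eq:JE_Canonical_CDF}.

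\emph{Converse.} Suppose $F_\N$ equals \eqref{eq:JE_Canonical_CDF} for admissible $p_\J$ and copulas $C_\J$. Let $Y_\N$ be the canonical JE vector built from the same $p_\J$, $C_\J$ and marginals; this is possible because the coefficients satisfy \eqref{eq:JE_parameters_1}--\eqref{eq:JE_parameters_2}. By the forward direction, $Y_\N$ has CDF \eqref{eq:JE_Canonical_CDF}. A CDF determines the law, so $X_\N\stackrel{d}{=}Y_\N$, and $X_\N$ is canonical JE.

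\emph{Main obstacle.} The delicate step is the bookkeeping in the forward direction: checking that the origin and axes contribute nothing to $\F_\I$ for $|\I|\ge2$, and that setting $x_k=0$ inside the copula correctly captures $\{X_k>0\}$ on $E_\J$. A secondary technical point is faces with some $\F_i(0)=0$. There \eqref{eq:JE_parameters_1} forces $p_\J=0$, so those terms should be read as zero by convention.
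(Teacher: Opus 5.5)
Your proof is correct, and its forward direction is exactly the paper's: apply the first line of \eqref{eq:JECDF}, decompose each $\F_\I(x_\I)$, $\I\in\M$, over the faces $E_\J$ with $\J\supseteq\I$, and set the copula arguments for $j\in\J\setminus\I$ equal to $1$. For the converse, the paper only asserts that the $p_\J$ and $C_\J$ are recoverable from the joint DDFs; your route (build a canonical $Y_\N$ from the same data and use that a CDF determines the law) is tidier, because it never needs to identify the copulas, which are not unique when the conditional marginals have atoms.
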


\begin{proof}
For any $\I\in\M$ and $x_\I\in[0,\infty)^{|\I|}$, the canonical construction
concentrates mass for the event $\{X_i>x_i,\,i\in\I\}$ exclusively on faces
$G_\J$ with $\J\supseteq\I$, so that
\[
\F_\I(x_\I)
=\sum_{\substack{\J\in\M\\\J\supseteq\I}}
p_\J\,C_\J\left(\frac{\F_i(x_i)}{\F_i(0)},\,i\in\I;\;1,\,j\in\J\setminus\I\right),
\]
where the arguments equal one for $j\in\J\setminus\I$ because $x_j=0$ implies
$\F_j(x_j)/\F_j(0)=1$, by the copula marginalisation property
$C(u,1,\dots,1)=u$. Substituting this expression into the first line
of~\eqref{eq:JECDF} yields~\eqref{eq:JE_Canonical_CDF}. Conversely, given a CDF
of the form~\eqref{eq:JE_Canonical_CDF}, the face probabilities $\{p_\J\}$ and
copulas $\{C_\J\}$ are recoverable from the joint DDFs $\F_\I$, confirming that
the random vector is canonical JE.
\end{proof}

For canonical JE random vectors, the coefficients $p_{\I}$ determine how
probability mass is allocated across the faces of the support, and thereby
indirectly along the axes and at the origin. The admissible values of
$\{p_\I\}_{\I\in\M}$ are governed entirely by the marginal
quantities $\F_1(0),\dots,\F_n(0)$, as encoded in
conditions~\eqref{eq:JE_parameters_1} and~\eqref{eq:JE_parameters_2}. In contrast,
the copulas $C_{\I}$ control the internal dependence structure within each face
$E_{\I}$; they may be chosen independently across different $\I\in\M$ and are not
constrained by the marginals.

The following remark illustrates several instructive choices of the parameters
$p_{\I}$ and their implications for the resulting dependence structure.

\begin{remark}
\label{rm:JE_Canonical_Remark}
\mbox{}
\begin{itemize}

\item[(1)]
If $p_{\I}=0$ for some $\I\in\M$, then face $E_{\I}$ is excluded from the support.
Moreover, the Fr\'{e}chet lower bound applied to~\eqref{eq:JEProbFaces} at
$x_\I=0$ gives
\[
\max\left\{
\sum_{i\in\I}\F_i(0)+\sum_{j\in\N\setminus \I}\bigl(1-\F_j(0)\bigr)-(n-1),\;0
\right\}\le p_{\I}=0,
\]
which yields the necessary condition
\[
\sum_{i\in\I}\F_i(0)-\sum_{j\in\N\setminus \I}\F_j(0)\le |\I|-1.
\]
At the extreme, if $p_{\I}=0$ for all $\I\in\M$, all probability mass is
concentrated on the axes and the origin, and no two components are simultaneously
positive. This recovers the ME structure, and condition~\eqref{eq:JE_parameters_2}
reduces to
\[
\sum_{i\in\N}\F_i(0)\le 1,
\]
which is precisely the ME condition~\eqref{eq:MEcondition}.

\item[(2)]
If $p_{\I}=1$ for some $\I\in\M$, all probability mass is concentrated on face
$E_{\I}$, and necessarily $p_{\J}=0$ for all $\J\in\M\setminus\{\I\}$. The
Fr\'{e}chet upper bound applied to~\eqref{eq:JEProbFaces} at $x_\I=0$ requires
\[
1=p_{\I}\le
\min\left\{\min_{i\in\I}\F_i(0),\;\min_{j\in\N\setminus\I}\bigl(1-\F_j(0)\bigr)\right\},
\]
which implies $\F_i(0)=1$ (i.e.\ $\P(X_i>0)=1$) for all $i\in\I$, and
$\F_j(0)=0$ (i.e.\ $X_j=0$ almost surely) for all $j\in\N\setminus\I$.

\item[(3)]
The mass on the $i$-th axis is given by~\eqref{eq:JEProbAxes}. Zero axis mass
for component $i$ requires equality in~\eqref{eq:JE_parameters_1}:
$\sum_{\I\in \M_i}p_{\I}=\F_i(0)$. Imposing this simultaneously for all
$i\in\N$ yields a system of $n$ linear equations in $|\M|=2^n-n-2$ unknowns.
For $n\ge 4$, the system is under-determined and generally admits infinitely many
solutions.

For $n=3$, the system is square ($|\M|=3$ equations in 3 unknowns) and has the
unique solution
\[
p_{\I}=\frac{\sum_{i\in\I}\F_i(0)-\F_j(0)}{2},
\qquad j\in\N\setminus\I,\quad\I\in\M.
\]
This solution satisfies $p_\I\ge0$ for all $\I\in\M$ if and only if
$\max_{k\in\N}\F_k(0)\le\sum_{j\ne k}\F_j(0)$ for every $k\in\N$, that is,
the three quantities $\F_1(0),\F_2(0),\F_3(0)$ satisfy the triangle inequality.
When this condition fails, simultaneous zero axis mass across all three axes is
infeasible.

\item[(4)]
The mass at the origin is governed by~\eqref{eq:JEProbOrigin}. Provided that
\eqref{eq:JE_parameters_1} holds, assigning zero mass to the origin requires
equality in~\eqref{eq:JE_parameters_2}:
\[
\sum_{\I\in\M}(|\I|-1)\,p_{\I}= \sum_{i\in\N}\F_i(0)-1.
\]

\item[(5)]
Suppose $\sum_{i\in\N}\F_i(0)=n-1$, so that equality is achieved
in~\eqref{eq:JECondition}. Let $c_0=\P(X_i=0,\,i\in\N)$ denote the origin mass
and $c_i=\P(X_i>0;\,X_j=0,\,j\in\N\setminus\{i\})$ the mass on the $i$-th axis.
Summing~\eqref{eq:JEProbAxes} over all $i\in\N$ and evaluating at $x_i=0$ gives
\begin{equation}
\label{eq:remark5_identity}
\sum_{i\in\N}c_i+\sum_{\I\in \M}|\I|\,p_{\I}=n-1.
\end{equation}
Since $|\I|\le n-1$ for every $\I\in\M$, and the total face mass satisfies
$\sum_{\I\in\M}p_\I=1-c_0-\sum_{i\in\N}c_i$ by normalization, we have
\[
\sum_{\I\in\M}|\I|\,p_\I\le (n-1)\sum_{\I\in\M}p_\I
=(n-1)\left(1-c_0-\sum_{i\in\N}c_i\right).
\]
Substituting into~\eqref{eq:remark5_identity} and rearranging yields
\[
(n-1)\,c_0+(n-2)\sum_{i\in\N}c_i\le 0.
\]
Since $n\ge 2$ implies $n-1\ge1$ and $n-2\ge0$, and all terms are
non-negative, it follows that $c_0=0$ and $c_i=0$ for every $i\in\N$. Consequently,
when equality holds in~\eqref{eq:JECondition}, the JE random vector concentrates
its mass entirely on the faces, with $\sum_{\I\in\M}p_\I=1$.
\end{itemize}
\end{remark}

Once the marginals are fixed, the admissible coefficients $\{p_\I\}_{\I\in\M}$ are
precisely those satisfying~\eqref{eq:JE_parameters_1} and~\eqref{eq:JE_parameters_2}.
In general, these constraints define a polytope in $\RR^{|\M|}$, leaving
substantial freedom in the choice of dependence structure. For $n=3$, however,
this polytope has a particularly tractable structure. As noted in
Remark~\ref{rm:JE_Canonical_Remark}-(3), the sum in~\eqref{eq:JE_parameters_2}
equals one half of the aggregate of the three sums in~\eqref{eq:JE_parameters_1},
rendering one constraint redundant. The system then reduces to three independent
linear constraints in three unknowns, and the feasible set collapses from a
three-dimensional region to a one-dimensional affine family. In particular, all
three coefficients $p_\I$ can be jointly parametrised by a single scalar
$\lambda\in[0,1]$, interpolating linearly between their extremal values implied
by~\eqref{eq:JE_parameters_1} and~\eqref{eq:JE_parameters_2}. The following
proposition makes this structure explicit.

\begin{proposition}
\label{prop:JE_trivariate}
Let $n=3$ and suppose that condition~\eqref{eq:JECondition} holds. For each
$\I\in\M$, define
\begin{equation}
\label{eq:JE_trivariate_P}
p_{\I}(\lambda)
=\lambda\,\min_{i\in\I}\F_i(0)
+(1-\lambda)\,\max\left\{\sum_{i\in\I}\F_i(0)-1,\;0\right\},
\quad\lambda\in[0,1].
\end{equation}
Then there exists $\lambda^*\in[0,1]$ such that the coefficients
$\{p_\I(\lambda^*)\}_{\I\in\M}$ satisfy
\eqref{eq:JE_parameters_1} and~\eqref{eq:JE_parameters_2}.
\end{proposition}

\begin{proof}
Assume~\eqref{eq:JECondition} and define, for each $\I\in\M$,
\[
U_{\I}=\min_{i\in\I}\F_i(0),
\qquad
L_{\I}=\max\left\{\sum_{i\in\I}\F_i(0)-1,\;0\right\}.
\]
These are the Fr\'{e}chet upper and lower bounds for $p_\I$: $U_\I$ is the maximum
value consistent with~\eqref{eq:JE_parameters_1} individually, and $L_\I$ is the
minimum imposed by the bivariate Fr\'{e}chet lower bound applied
to~\eqref{eq:JEProbFaces}. The inequality $U_\I\ge L_\I$ holds because
$\min_{i\in\I}\F_i(0)\ge\sum_{i\in\I}\F_i(0)-1$ reduces to
$\max_{i\in\I}\F_i(0)\le1$, which is immediate.

Substituting the linear interpolation $p_\I(\lambda)=\lambda U_\I+(1-\lambda)L_\I$
into~\eqref{eq:JE_parameters_1} and~\eqref{eq:JE_parameters_2} and solving for
$\lambda$ yields the requirement
\begin{equation}
\label{eq:JE_trivariate_lambda}
\ell\;\le\;\lambda\;\le\;u_i\quad\text{for all }i\in\N,
\end{equation}
where
\[
\ell=\max\left\{
\frac{\sum_{i\in\N}\F_i(0)-1-\sum_{\I\in\M}L_{\I}}
     {\sum_{\I\in\M}(U_{\I}-L_{\I})},\;0\right\},
\qquad
u_i=\frac{\F_i(0)-\sum_{\I\in\M_i}L_{\I}}
         {\sum_{\I\in\M_i}(U_{\I}-L_{\I})}.
\]
We show that $\ell\le u_i\le1$ for every $i\in\N$, so that a valid $\lambda^*$
exists.

\smallskip
\noindent\textit{Non-negativity of $u_i$.}
For $n=3$ and each $i\in\N$, the two sets $\I\in\M_i$ each have $|\I|=2$, so
$L_\I=\max\{\F_i(0)+\F_k(0)-1,0\}$ for the other element $k\in\I\setminus\{i\}$.
The numerator $\F_i(0)-\sum_{\I\in\M_i}L_\I$ takes one of the following values
according to how many $L_\I$ are positive:
$\F_i(0)$, $1-\F_k(0)$ for $k\in\N\setminus\{i\}$, or
$2-\sum_{j\in\N}\F_j(0)$.
Each of these is non-negative: the first two trivially, and the last by
condition~\eqref{eq:JECondition}. Hence $u_i\ge0$.

\smallskip
\noindent\textit{Upper bound $u_i\le1$.}
The bound $u_i\le1$ is equivalent to $\F_i(0)\le\sum_{\I\in\M_i}U_\I$. For
$n=3$ and fixed $i$, $\sum_{\I\in\M_i}U_\I=\min(\F_i(0),\F_j(0))+\min(\F_i(0),\F_k(0))$
for $\{j,k\}=\N\setminus\{i\}$. This sum falls below $\F_i(0)$ only when
$\F_i(0)>\F_j(0)+\F_k(0)$, i.e.\ when $\F_i(0)$ strictly dominates both others.
Since $\F_j(0)>\F_i(0)$ and $\F_i(0)>\F_k(0)+\F_j(0)$ cannot hold simultaneously,
the condition $\sum_{\I\in\M_i}U_\I<\F_i(0)$ fails for at most one index $i$.
The minimum $\min_{i\in\N}u_i$ is therefore governed by the remaining indices, for
which $u_i\le1$, so $\min_{i\in\N}u_i\le1$.

\smallskip
\noindent\textit{Interval non-emptiness: $\ell\le\min_{i\in\N}u_i$.}
Since $\sum_{\I\in\M_i}(U_\I-L_\I)\le\sum_{\I\in\M}(U_\I-L_\I)$ and the
numerator of $u_i$ is non-negative,
\[
u_i\;\ge\;\frac{\F_i(0)-\sum_{\I\in\M_i}L_{\I}}{\sum_{\I\in\M}(U_{\I}-L_{\I})}.
\]
Using $\sum_{j\in\N\setminus\{i\}}\F_j(0)-1-L_{\N\setminus\{i\}}\le0$ (a
consequence of $L_{\N\setminus\{i\}}\ge\sum_{j\ne i}\F_j(0)-1$) and
$\sum_{\I\in\M}L_\I=\sum_{\I\in\M_i}L_\I+L_{\N\setminus\{i\}}$, we obtain
\begin{align*}
u_i
&\ge\frac{\F_i(0)-\sum_{\I\in\M_i}L_{\I}+\sum_{j\in\N\setminus\{i\}}\F_j(0)
         -1-L_{\N\setminus\{i\}}}{\sum_{\I\in\M}(U_{\I}-L_{\I})}
 =\frac{\sum_{j\in\N}\F_j(0)-1-\sum_{\I\in\M}L_{\I}}
       {\sum_{\I\in\M}(U_{\I}-L_{\I})}.
\end{align*}
Since this lower bound is precisely $\max\{\cdot,0\}$-truncated to give $\ell$,
we have $u_i\ge\ell$ for every $i\in\N$. Therefore, $\lambda^*$ exists in
$[\ell,\,\min_{i\in\N}u_i]\subseteq[0,1]$.
\end{proof}

%+++++++++++++++++++++++++++++++++++++++++++++++++++
\begin{example}
\label{ex:JE_canonical_example}
Let $n=3$, so that $\M=\bigl\{\{1,2\},\{1,3\},\{2,3\}\bigr\}$. Suppose
$\F_i(x_i)=\tfrac{1}{2}(1-x_i)$ for $x_i\in[0,1]$ and $i\in\N$, corresponding
to $X_i\sim\mathrm{Uniform}[0,1]$ marginals. Then $\sum_{i\in\N}\F_i(0)=3/2$,
so condition~\eqref{eq:JECondition} is satisfied. Applying
Proposition~\ref{prop:JE_trivariate} with $U_\I=\min_{i\in\I}\F_i(0)=1/2$ and
$L_\I=\max\{\sum_{i\in\I}\F_i(0)-1,0\}=0$ for every $\I\in\M$ gives
\[
p_{\I}(\lambda)=\frac{\lambda}{2},
\qquad
\lambda\in\left[\frac{1}{3},\,\frac{1}{2}\right].
\]
At $\lambda=1/3$, equality holds in~\eqref{eq:JE_parameters_2}, so the mass at
the origin vanishes (see Remark~\ref{rm:JE_Canonical_Remark}-(4)). At
$\lambda=1/2$, equality holds in~\eqref{eq:JE_parameters_1} for every $i$, so
all axes carry zero mass (see Remark~\ref{rm:JE_Canonical_Remark}-(3)).
Intermediate values of $\lambda$ distribute mass across both the faces and the
axes.

Now choose the copulas
\[
C_{12}(u_1,u_2)=u_1u_2,\quad
C_{13}(u_1,u_3)=\min\{u_1,u_3\},\quad
C_{23}(u_2,u_3)=\max\{u_2+u_3-1,0\},
\]
so that the three faces exhibit, respectively, independence, co-monotonicity, and
counter-monotonicity. By Corollary~\ref{cor:JE_Canonical_CDF_phi} and the
identity $\F_i(x_i)/\F_i(0)=1-x_i$ for $x_i\in[0,1]$, the joint CDF is
\[
F_{\N}(x_{\N})=\begin{cases}
{\displaystyle
1-\frac{1}{2}\sum_{i\in\N}(1-x_i)
+\frac{\lambda}{2}(1-x_1)(1-x_2)}\\[6pt]
{\displaystyle\quad
+\,\frac{\lambda}{2}\min\{1-x_1,1-x_3\}
+\frac{\lambda}{2}\max\{1-x_2-x_3,0\}},
& x_{\N}\in[0,1]^3,\\[8pt]
0, & \text{otherwise.}
\end{cases}
\]
The support of $X_{\N}$ is illustrated in Figure~\ref{fig:JE_canonical_ex}.

\begin{figure}[htbp]
\centering
\includegraphics[width=0.75\textwidth]{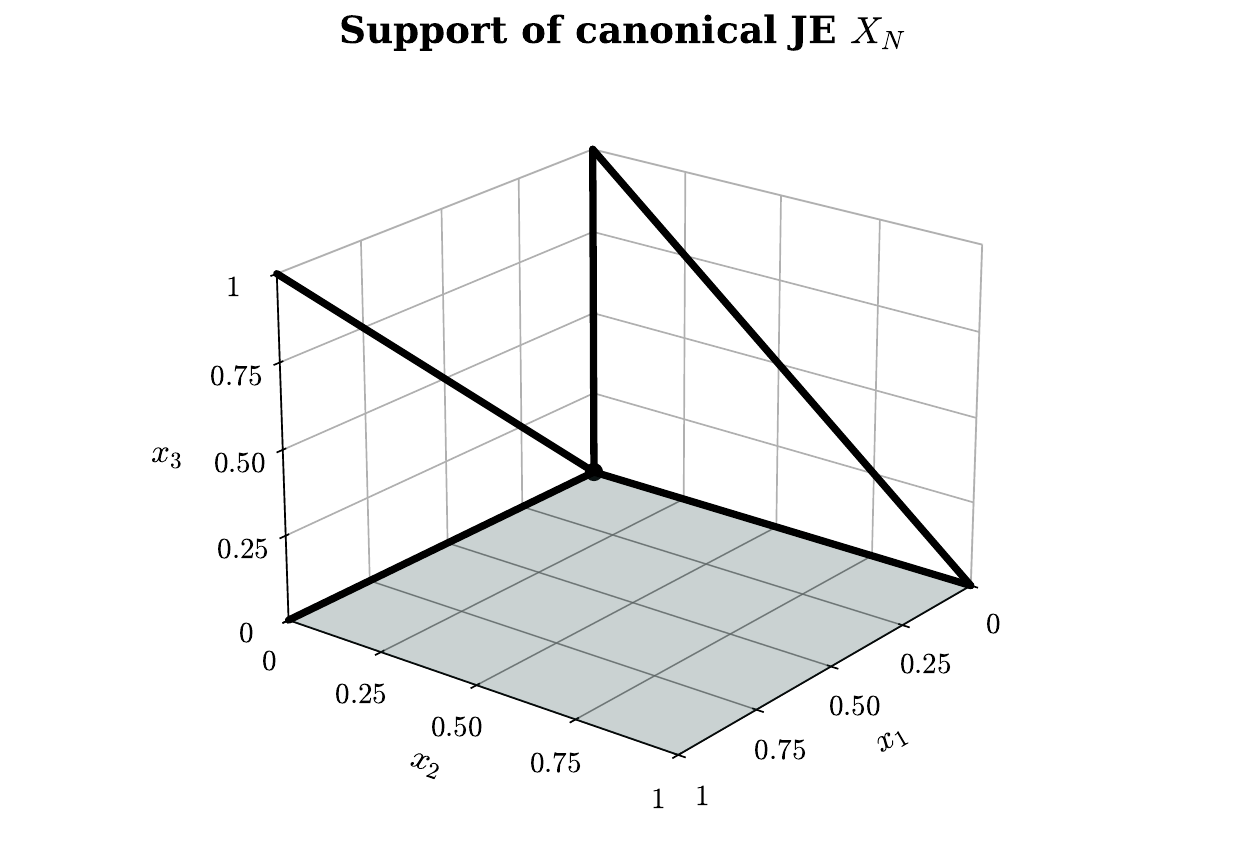}
\caption{Support of the canonical JE random vector $X_{\N}$ with copulas
$C_{12}$, $C_{13}$, and $C_{23}$.}
\label{fig:JE_canonical_ex}
\end{figure}

The Pearson correlations are computed by observing that, on face $E_{\I}$,
$X_i|\text{face}\sim\mathrm{Uniform}[0,1]$, and contributions from all faces
not containing both components vanish. This gives $\E[X_i]=1/4$,
$\mathrm{Var}(X_i)=5/48$, and cross-moments
$\E[X_1X_2]=\lambda/8$ (independence),
$\E[X_1X_3]=\lambda/6$ (co-monotone, $X_1=X_3$ a.s.\ on face $E_{\{1,3\}}$),
$\E[X_2X_3]=\lambda/12$ (counter-monotone, $X_2+X_3=1$ a.s.\ on face $E_{\{2,3\}}$),
yielding
\[
\rho_{12}=\frac{6\lambda-3}{5},\qquad
\rho_{13}=\frac{8\lambda-3}{5},\qquad
\rho_{23}=\frac{4\lambda-3}{5}.
\]
For example,
\begin{align*}
\lambda=\tfrac{1}{3}:\quad
&\rho_{12}=-\tfrac{1}{5},\quad
\rho_{13}=-\tfrac{1}{15},\quad
\rho_{23}=-\tfrac{1}{3},\\
\lambda=\tfrac{5}{12}:\quad
&\rho_{12}=-\tfrac{1}{10},\quad
\rho_{13}=\tfrac{1}{15},\quad
\rho_{23}=-\tfrac{4}{15},\\
\lambda=\tfrac{1}{2}:\quad
&\rho_{12}=0,\quad
\rho_{13}=\tfrac{1}{5},\quad
\rho_{23}=-\tfrac{1}{5}.
\end{align*}
All three correlations increase linearly in $\lambda$, reflecting the diminishing
influence of axis mass (which induces strong negative dependence through the mutual
exclusion effect) as face mass grows. At $\lambda=1/2$, all mass is on the faces
and the correlations are determined entirely by the copulas $C_\I$.

\medskip
This example highlights the flexibility of the canonical JE framework. Although
the global structure enforces a form of counter-monotonicity through the JE
constraint, the model accommodates a wide range of local dependence structures
through the joint specification of the copulas $C_{\I}$ and coefficients $p_{\I}$.
\end{example}

Although the canonical JE framework covers a wide range of modelling choices,
the dependence structure within each face $E_{\I}$ is constrained by the marginals
through the ratios $\F_i(x_i)/\F_i(0)$. The copulas $C_\I$ may be chosen freely,
but the distributional mass on each face is spread over the range of the original
marginals. To afford greater control over the effective range of the face
distribution, we introduce a generalization that replaces the ratio
$\F_i(x_i)/\F_i(0)$ by a prescribed transformation $G_i(\F_i(x_i))$. We call
the resulting class the \emph{generalised JE} or G-JE.

\begin{definition}
\label{def:g-JE}
For each $i\in\N$, let $G_i:[0,1]\to[0,1]$ be a non-decreasing, left-continuous
function satisfying $G_i(u)=0$ for $u\in[0,a_i]$ and $G_i(u)=1$ for
$u\in[b_i,1]$, where $0\le a_i<b_i\le\F_i(0)$. The boundary condition
$b_i\le\F_i(0)$ ensures $G_i(\F_i(0))=1$, which guarantees that the face
probability at $x_\I=0$ equals $p_\I$ and that the origin mass formula is
unchanged. The composition $G_i\circ\F_i$ is right-continuous in $x_i$ since
$\F_i$ is right-continuous and non-increasing, and left-continuity of $G_i$
ensures $G_i(\F_i(x_n))\to G_i(\F_i(x))$ as $x_n\downarrow x$.

A random vector $X_{\N}$ is said to be \emph{G-JE} if, for each $\I\in\M$,
\begin{equation}
\label{eq:G-JEProbFaces}
\P\left(X_i>x_i,\;i\in \I;\;X_j=0,\;j\in\N\setminus \I\right)
= p_{\I}\,C_{\I}\left(G_i\left(\F_i(x_i)\right),\;i\in\I\right),
\quad x_{\I}\in[0,\infty)^{|\I|};
\end{equation}
for each $i\in\N$,
\begin{equation}
\label{eq:G-JEProbAxes}
\P\left(X_i>x_i;\;X_j=0,\;j\in\N\setminus\{i\}\right)
=\F_i(x_i)-\left(\sum_{\I\in \M_i}p_{\I}\right)G_i\left(\F_i(x_i)\right),
\quad x_i\in[0,\infty);
\end{equation}
and the origin probability is as in~\eqref{eq:JEProbOrigin}. The coefficients
$p_\I\ge0$ satisfy condition~\eqref{eq:JE_parameters_2} together with
\begin{equation}
\label{eq:G-JE_parameters_1}
\sum_{\I\in \M_i}p_{\I}\le G_i^*, \qquad i\in\N,
\end{equation}
where
\begin{equation}
\label{eq:G-JE_G_star}
G_i^*=\inf_{\substack{0\le x_i<y_i\\
G_i(\F_i(x_i))>G_i(\F_i(y_i))}}
\frac{\F_i(x_i)-\F_i(y_i)}{G_i\left(\F_i(x_i)\right)-G_i\left(\F_i(y_i)\right)}.
\end{equation}
\end{definition}

The quantity $G_i^*$ is the smallest local ratio of the decrease in $\F_i$ to
the corresponding decrease in $G_i\circ\F_i$; it controls how much face mass
the $i$-th axis can support while remaining a valid non-negative, non-increasing
function of $x_i$. The canonical JE is recovered as the special case
$G_i(u)=u/\F_i(0)$, for which $G_i^*=\F_i(0)$ and condition~\eqref{eq:G-JE_parameters_1}
reduces to~\eqref{eq:JE_parameters_1}.

\begin{theorem}
\label{thm:G_JE_characterization}
Let $F_1,\dots,F_n$ be marginal distributions on $[0,\infty)$ and let
$G_1,\dots,G_n$ be functions as in Definition~\ref{def:g-JE}. A G-JE random
vector $X_{\N}$ with marginals $F_1,\dots,F_n$ exists if and only if
\begin{equation}
\label{eq:G-JECondition}
\sum_{i\in\N}\F_i(0)-1
\le \min\left\{\frac{n-2}{n-1}\sum_{i\in\N}G_i^*,\;
\sum_{i\in\N}G_i^*-\max_{i\in\N}G_i^*\right\}.
\end{equation}
\end{theorem}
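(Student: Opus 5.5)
The plan is to reduce Theorem~\ref{thm:G_JE_characterization} to the same linear-programming feasibility problem as in the proof of Theorem~\ref{thm:JE_Characterisation_1}, with $\F_i(0)$ replaced by $G_i^*$ in the axis constraints.

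\textbf{Step 1: validity of the assignment.} Fix $p_\I\ge0$ and set $s_i=\sum_{\I\in\M_i}p_\I$. I will show that the G-JE assignment \eqref{eq:G-JEProbFaces}, \eqref{eq:G-JEProbAxes}, \eqref{eq:JEProbOrigin} is a valid probability distribution with marginals $F_i$ exactly when \eqref{eq:G-JE_parameters_1} and \eqref{eq:JE_parameters_2} hold.

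\emph{Faces.} Each $G_i\circ\F_i$ is a right-continuous, non-increasing survival function on $[0,\infty)$. It equals $1$ at $x_i=0$ because $b_i\le\F_i(0)$, and it tends to $0$ as $x_i\to\infty$ because $G_i(0)=0$. Hence $p_\I C_\I(G_i(\F_i(x_i)),i\in\I)$ is $p_\I$ times a genuine joint survival function on the open face $E_\I$.

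\emph{Marginals.} Setting all other face coordinates to $0$ and using $C(u,1,\dots,1)=u$, face $\I$ contributes $p_\I G_i(\F_i(x_i))$ to $\P(X_i>x_i)$. The axis formula \eqref{eq:G-JEProbAxes} is therefore forced by the marginal constraint.

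\emph{Axes.} Write $H_i(x)=\F_i(x)-s_iG_i(\F_i(x))$. $H_i$ is non-increasing iff, for every $x<y$,
\[
\F_i(x)-\F_i(y)\ge s_i\bigl(G_i(\F_i(x))-G_i(\F_i(y))\bigr).
\]
Pairs with $G_i(\F_i(x))=G_i(\F_i(y))$ satisfy this trivially. For the remaining pairs, dividing through shows the condition is exactly $s_i\le G_i^*$ by \eqref{eq:G-JE_G_star}. Non-negativity of $H_i$ then follows from monotonicity together with $H_i(x)\to0$ as $x\to\infty$.

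\emph{Origin and normalisation.} Since $G_i(\F_i(0))=1$, $H_i(0)=\F_i(0)-s_i$, the same as in the canonical case. So the origin mass is again \eqref{eq:JEProbOrigin}, its non-negativity is \eqref{eq:JE_parameters_2}, and total mass is one exactly as in Step~3 of Theorem~\ref{thm:JE_Characterisation_1}.

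\textbf{Step 2: the linear program.} By Step~1, a G-JE vector with the given marginals and $G_i$ exists iff some $p\ge0$ satisfies \eqref{eq:G-JE_parameters_1} and \eqref{eq:JE_parameters_2}. This covers both directions: necessity holds because any G-JE vector comes with such coefficients by Definition~\ref{def:g-JE}. The feasibility question is whether the optimum of
\[
\max\sum_{\I\in\M}(|\I|-1)p_\I \quad\text{subject to}\quad p_\I\ge0,\quad \sum_{\I\in\M_i}p_\I\le G_i^*,\quad i\in\N,
\]
is at least $\sum_i\F_i(0)-1$. Its dual is
\[
\min\sum_i G_i^*r_i \quad\text{subject to}\quad r\ge0,\quad \sum_{i\in\I}r_i\ge|\I|-1,\quad \I\in\M.
\]
The dual feasible region is identical to \eqref{sys:dual}; only the objective weights change from $\F_i(0)$ to $G_i^*$. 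Both programs are feasible ($p=0$ and $r\equiv1$), so strong duality applies.

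\textbf{Step 3: evaluating the dual minimum.} Invoke the extreme-point analysis used for \eqref{eq:JE_minimum_sol}, now with non-negative weights $G_i^*$. This gives the dual minimum
\[
\min\left\{\tfrac{n-2}{n-1}\sum_iG_i^*,\;\sum_iG_i^*-\max_iG_i^*\right\}.
\]
Comparing this with $\sum_i\F_i(0)-1$ yields \eqref{eq:G-JECondition} in both directions. Note that, unlike in Theorem~\ref{thm:JE_Characterisation_1}, the second term does not collapse to a trivial inequality, because the left side involves $\F_i(0)$ while the right side involves $G_i^*$. Taking $x=0$ and $y\to\infty$ in \eqref{eq:G-JE_G_star} shows $G_i^*\le\F_i(0)$, and both terms must be retained.

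\textbf{Main obstacle.} The step I expect to be hardest is justifying that the dual minimum over the polytope is attained only at $r^*$ or at the permutations of $(1,\dots,1,0)$, for arbitrary weights. The earlier proof asserts this rather than proving it, and the argument must not rely on the particular weights $\F_i(0)$. I would close the gap from the primal side by exhibiting primal-feasible $p$ attaining each value.
\begin{itemize}
\item[(i)] Put mass only on the $(n-1)$-faces $\N\setminus\{j\}$. The constraints become $\sum_{j\ne i}q_j\le G_i^*$, and with equality throughout the objective is $(n-2)\sum_jq_j=\tfrac{n-2}{n-1}\sum G_i^*$. This is available when the resulting $q_j\ge0$, i.e.\ when $\max_iG_i^*\le\tfrac1{n-1}\sum G_i^*$.
\item[(ii)] In the complementary case, saturate around the largest $G_{i_0}^*$ to attain $\sum G_i^*-\max G_i^*$.
\end{itemize}
By weak duality, matching each primal construction against the corresponding dual point then certifies the minimum.
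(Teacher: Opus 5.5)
Your argument follows the paper's route. The same face/axis/origin bookkeeping reduces existence to finding $p_\I\ge0$ with $\sum_{\I\in\M_i}p_\I\le G_i^*$ and \eqref{eq:JE_parameters_2}. The LP \eqref{sys:original}--\eqref{sys:dual}, with weights $G_i^*$ and strong duality, then finishes the proof. The one real difference is how you obtain the LP value. The paper re-invokes its unproved claim that the dual minimum sits at $r^*$ or at a permutation of $(1,\dots,1,0)$. You instead certify the value with explicit primal solutions and weak duality. That is the right way to close the hole, and your case (i) closes it completely.

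Case (ii), however, is still only a slogan, and its most natural reading fails. Putting mass $G_j^*$ on each pair $\{i_0,j\}$ needs $\sum_{j\ne i_0}G_j^*\le G_{i_0}^*$. Case (ii) only gives $G_j^*\le G_{i_0}^*$ and $\sum_{j\ne i_0}G_j^*\le(n-2)G_{i_0}^*$, so for $n\ge4$ larger faces are unavoidable. To attain $\sum_iG_i^*-G_{i_0}^*$, every face carrying mass must contain $i_0$, and every $j\ne i_0$ must be saturated. You therefore need weights on nonempty sets $J\subseteq\N\setminus\{i_0\}$ with $|J|\le n-2$. These weights must cover each $j$ with total exactly $G_j^*$ and have total at most $T:=G_{i_0}^*$.

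A wrap-around construction supplies them. Lay intervals of lengths $G_j^*$ ($j\ne i_0$) end to end and reduce modulo $T$. Since $G_j^*\le T$, index $j$ occupies a set $A_j\subseteq[0,T)$ of measure $G_j^*$. Since the total length is at most $(n-2)T$, each $t\in[0,T)$ lies in at most $n-2$ of the sets $A_j$. Let $p_{\{i_0\}\cup J}$ be the Lebesgue measure of $\{t:\{j:t\in A_j\}=J\}$. These faces lie in $\M$ and give $s_j=G_j^*$ for $j\ne i_0$ and $s_{i_0}\le T$. Their objective value is $\sum_{j\ne i_0}G_j^*$, which matches the dual point with the zero in position $i_0$.

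There is also a caveat you inherit from the paper: $G_i(\F_i(x))\to0$ does not follow from $G_i(0)=0$. When $a_i=0$, left-continuity still allows $G_i(0^+)>0$. If in addition $\F_i>0$ everywhere, face mass escapes to infinity, $H_i$ eventually turns negative, and $G_i^*$ can exceed $\F_i(0)$. For example, $G_i(u)=\tfrac12+u/(2\F_i(0))$ on $(0,\F_i(0)]$ gives $G_i^*=2\F_i(0)$. Both your non-negativity argument and the bound $G_i^*\le\F_i(0)$ need $G_i(0^+)=0$, which is automatic when $a_i>0$. Without this assumption the ``if'' direction of the theorem can fail.
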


\begin{proof}
\textit{Validity of the construction.}
Since each $G_i$ is left-continuous with $G_i(u)=0$ for $u\le a_i$ and
$G_i(\F_i(0))=1$, the face probabilities in~\eqref{eq:G-JEProbFaces} define
valid DDFs on $E_{\I}$: at $x_\I=0$ they give $p_\I C_\I(1,\dots,1)=p_\I$, and
they are non-increasing in each $x_i$ and right-continuous.

\textit{Axis probabilities.}
Define $h_i(u)=u-s_i G_i(u)$ for $u\in[0,\F_i(0)]$, where
$s_i=\sum_{\I\in\M_i}p_\I$. The axis probability for component $i$ equals
$h_i(\F_i(x_i))$. For this to define a valid (non-negative, non-increasing in
$x_i$) survival function on the $i$-th axis, $h_i$ must be non-decreasing as a
function of $u$ on $[0,\F_i(0)]$. Since $G_i(0)=0$, we have $h_i(0)=0$;
non-decreasingness of $h_i$ then ensures non-negativity. For any
$u>v$ with $G_i(u)>G_i(v)$, non-decreasingness requires
$u-v\ge s_i(G_i(u)-G_i(v))$, i.e.\ $s_i\le(u-v)/(G_i(u)-G_i(v))$. Taking the
infimum over all such pairs yields condition~\eqref{eq:G-JE_parameters_1}, with
$G_i^*$ as defined in~\eqref{eq:G-JE_G_star}. Moreover, since $G_i(\F_i(0))=1$,
taking $y_i\to\infty$ in~\eqref{eq:G-JE_G_star} shows $G_i^*\le\F_i(0)$,
ensuring non-negativity also at $x_i=0$.

\textit{Origin probability.}
Since $G_i(\F_i(0))=1$ for each $i$, the total mass on the axes equals
$\sum_{i\in\N}[\F_i(0)-s_i]$ and the total face mass is $\sum_{\I\in\M}p_\I$.
Subtracting from unity gives the origin mass $1-\sum_{i\in\N}\F_i(0)+\sum_{\I\in\M}(|\I|-1)p_\I$,
which coincides with~\eqref{eq:JEProbOrigin} and is non-negative if and only
if~\eqref{eq:JE_parameters_2} holds.

\textit{Necessity and sufficiency of~\eqref{eq:G-JECondition}.}
The construction yields a valid G-JE distribution if and only if there exist
$p_\I\ge0$ simultaneously satisfying~\eqref{eq:G-JE_parameters_1}
and~\eqref{eq:JE_parameters_2}. Applying the same linear programming argument as
in~\eqref{sys:original}--\eqref{sys:dual}, with $G_i^*$ in place of $\F_i(0)$,
the primal maximum of $\sum_{\I\in\M}(|\I|-1)p_\I$ subject to
$p_\I\ge0$ and~\eqref{eq:G-JE_parameters_1} equals, by strong duality,
\[
\min\left\{\frac{n-2}{n-1}\sum_{i\in\N}G_i^*,\;
\sum_{i\in\N}G_i^*-\max_{i\in\N}G_i^*\right\}.
\]
Feasible $p_\I$ satisfying~\eqref{eq:JE_parameters_2} exist if and only if this
maximum is at least $\sum_{i\in\N}\F_i(0)-1$, which is precisely
condition~\eqref{eq:G-JECondition}.
\end{proof}

%+++++++++++++++++++++++++++++++++++++++++++++++++++
\begin{remark}
\label{rm:G-JE_Remark}
\mbox{}
\begin{itemize}

\item[(1)]
The G-JE is strictly more flexible than canonical JE: the parameters $a_i$ and
$b_i$ restrict the effective support on each face $E_\I$ to a sub-range of the
marginal support, while the functions $G_i$ introduce distributional distortions
within that range, all without disrupting the global JE constraint. The cost is a
more stringent existence condition~\eqref{eq:G-JECondition}, owing to the
inequality $G_i^*\le\F_i(0)$ established in the proof of
Theorem~\ref{thm:G_JE_characterization}. When $\F_i$ attains both boundary
values $a_i$ and $b_i$, this bound sharpens further: if there exist
$x_i^{(1)}<x_i^{(2)}$ such that $\F_i(x_i^{(1)})=b_i$ and
$\F_i(x_i^{(2)})=a_i$, then
\[
G_i^*\le\frac{b_i-a_i}{G_i(b_i)-G_i(a_i)}=b_i-a_i\le\F_i(0),
\]
so the admissible range for $\{p_\I\}_{\I\in\M}$ is correspondingly smaller
than in the canonical case.

\item[(2)]
Suppose $\F_i$ is continuous at some $x_i^*\in(0,\infty)$ while
$G_i\circ\F_i$ has a left-jump there, i.e.\
$c:=\lim_{x\nearrow x_i^*}G_i(\F_i(x))-G_i(\F_i(x_i^*))>0$.
Since $G_i$ is left-continuous and $\F_i$ is right-continuous, such a
left-jump in $G_i\circ\F_i$ arises precisely when $G_i$ has a
right-discontinuity at $\F_i(x_i^*)$: as $x_i^{(k)}\nearrow x_i^*$, one has
$\F_i(x_i^{(k)})\searrow\F_i(x_i^*)$ from above, so $G_i(\F_i(x_i^{(k)}))$
converges to the right-limit $G_i(\F_i(x_i^*)^+)=G_i(\F_i(x_i^*))+c$.
Taking $y_i=x_i^*$ and $x_i=x_i^{(k)}\nearrow x_i^*$
in~\eqref{eq:G-JE_G_star}: continuity of $\F_i$ gives
$\F_i(x_i^{(k)})-\F_i(x_i^*)\to0$, while the denominator converges to $c>0$.
Hence $G_i^*=0$, and condition~\eqref{eq:G-JE_parameters_1} forces
\[
p_{\I}=0\quad\text{for all }\I\in\M_i.
\]
This constraint is necessary: if $s_i=\sum_{\I\in\M_i}p_\I>0$, then the
axis survival function $h_i(x_i)=\F_i(x_i)-s_i G_i(\F_i(x_i))$ satisfies
$\lim_{x\nearrow x_i^*}h_i(x)=h_i(x_i^*)-s_i c<h_i(x_i^*)$,
so the left-limit of $h_i$ at $x_i^*$ is strictly less than its value,
contradicting the required non-increasing property. Consequently, wherever
$\F_i$ is continuous, $G_i\circ\F_i$ must be continuous as well.

\item[(3)]
If $G_i\circ\F_i$ is constant on some sub-interval while $\F_i$ is strictly
decreasing there, the denominator in~\eqref{eq:G-JE_G_star} vanishes for the
corresponding pairs $(x_i,y_i)$. These pairs are excluded from the infimum by
the constraint $G_i(\F_i(x_i))>G_i(\F_i(y_i))$, so no difficulty arises.

\item[(4)]
The CDF of a G-JE random vector $X_\N$ takes the same form as the canonical
expression~\eqref{eq:JE_Canonical_CDF}, with each copula argument
$\F_i(x_i)/\F_i(0)$ replaced by $G_i(\F_i(x_i))$.

\item[(5)]
Remark~\ref{rm:JE_Canonical_Remark}-(1) carries over to G-JE without
modification. Remark~\ref{rm:JE_Canonical_Remark}-(2) extends with the
additional requirement $G_i^*=\F_i(0)=1$ for all $i\in\I$, which forces
those components to reduce to the canonical case.
Remark~\ref{rm:JE_Canonical_Remark}-(4) carries over without modification.

Analogously to Remark~\ref{rm:JE_Canonical_Remark}-(3), the $i$-th axis
carries zero mass if and only if equality holds in~\eqref{eq:G-JE_parameters_1}:
$\sum_{\I\in\M_i}p_\I=G_i^*$. Substituting
into~\eqref{eq:G-JEProbAxes} gives $G_i(\F_i(x_i))=\F_i(x_i)/G_i^*$ for
all $x_i\ge0$; evaluating at $x_i=0$ and using $G_i(\F_i(0))=1$ yields
$G_i^*=\F_i(0)$ and $G_i(u)=u/\F_i(0)$, reducing component $i$ to the
canonical case. Consequently, for $n=3$, requiring zero mass on all three
axes forces $G_i^*=\F_i(0)$ for every $i\in\N$, and the system
$\sum_{\I\in\M_i}p_\I=\F_i(0)$ reduces to its canonical counterpart with
unique solution
\[
p_\I=\frac{\sum_{i\in\I}\F_i(0)-\F_j(0)}{2},\qquad j\in\N\setminus\I,
\]
valid when $\{\F_i(0)\}_{i\in\N}$ satisfies the triangle inequality
(see Remark~\ref{rm:JE_Canonical_Remark}-(3)).

The analog of Remark~\ref{rm:JE_Canonical_Remark}-(5), however, does not
extend to G-JE. In the canonical setting, the tight condition
$\sum_{i\in\N}\F_i(0)=n-1$ implies $(n-1)a_0+(n-2)\sum_{i\in\N}a_i\le0$,
forcing all axis and origin masses to zero. For G-JE, the existence
condition~\eqref{eq:G-JECondition} becomes tight when
$\sum_{i\in\N}\F_i(0)-1$ equals the right-hand side of~\eqref{eq:G-JECondition},
a quantity governed by $\{G_i^*\}$ rather than $\{\F_i(0)\}$. Since each axis
carries a minimum mass of $\F_i(0)-s_i\ge\F_i(0)-G_i^*\ge0$ regardless of
the choice of $\{p_\I\}$, a tight G-JE condition does not in general force
the axis masses to vanish. For instance, with $n=3$, $\F_i(0)=\tfrac{1}{2}$
and $G_i^*=\tfrac{1}{3}$ for all $i\in\N$,
condition~\eqref{eq:G-JECondition} holds with equality, yet each axis
carries a minimum mass of $\F_i(0)-G_i^*=\tfrac{1}{6}>0$.
\end{itemize}
\end{remark}

Analogously to Proposition~\ref{prop:JE_trivariate}, the admissible coefficients
$\{p_\I\}_{\I\in\M}$ for a trivariate G-JE random vector admit a one-parameter
linear interpolation. The role of $\F_i(0)$ in the upper bounds $U_\I$ is now
played by $G_i^*$, while the lower bounds $L_\I$ are adjusted to reflect
constraint~\eqref{eq:G-JE_parameters_1} in place of~\eqref{eq:JE_parameters_1}.

\begin{proposition}
\label{prop:G-JE_trivariate}
Let $n=3$ and suppose that condition~\eqref{eq:G-JECondition} holds. For each
$\I\in\M$ with complementary index $j\in\N\setminus\I$, define
\begin{equation}
\label{eq:G-JE_trivariate_P}
p_{\I}(\lambda)
=\lambda\,\min_{i\in\I}G_i^*
+(1-\lambda)\,\max\left\{\sum_{k\in\N}\F_k(0)-G_j^*-1,\;0\right\},
\quad\lambda\in[0,1].
\end{equation}
Then there exists $\lambda^*\in[0,1]$ such that $\{p_\I(\lambda^*)\}_{\I\in\M}$
satisfies both~\eqref{eq:G-JE_parameters_1} and~\eqref{eq:JE_parameters_2}.
\end{proposition}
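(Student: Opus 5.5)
The plan is to reduce everything to a one-dimensional problem in $\lambda$, exactly as in the proof of Proposition~\ref{prop:JE_trivariate}. For $\I=\{i,k\}\in\M$ with complementary index $j$, write $U_\I=\min_{i\in\I}G_i^*$ and $L_\I=\max\{S-G_j^*-1,0\}$, where $S=\sum_{k\in\N}\F_k(0)$. For $n=3$ we have $\frac{n-2}{n-1}=\tfrac12$. Ordering $g_1\le g_2\le g_3$ for the sorted values of $G_1^*,G_2^*,G_3^*$, condition~\eqref{eq:G-JECondition} reads
\[
S-1\le\min\Bigl\{\tfrac12(g_1+g_2+g_3),\;g_1+g_2\Bigr\}.
\]
Every constraint in~\eqref{eq:G-JE_parameters_1} and~\eqref{eq:JE_parameters_2} is affine in $\lambda$, so the admissible set of $\lambda$ is an interval. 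I would show it is non-empty by exhibiting an explicit point of it.

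\emph{Step 1: $U_\I\ge L_\I$ for every $\I$.} We need $\min(G_i^*,G_k^*)\ge S-1-G_j^*$. Since $S-1\le g_1+g_2$, and $g_1+g_2\le G_j^*+\min(G_i^*,G_k^*)$ for any labelling, this holds. Consequently $p_\I(\lambda)\ge L_\I\ge S-1-G_j^*$ for all $\lambda\in[0,1]$.

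\emph{Step 2: $\lambda=0$ satisfies~\eqref{eq:G-JE_parameters_1}.} For index $i$, the row sum is $L_{\{i,j\}}+L_{\{i,k\}}$. I would split into cases according to how many of these terms are positive.
\begin{itemize}
\item[(a)] If none is positive, the row sum is $0\le G_i^*$, which is trivial.
\item[(b)] If exactly one is positive, the row sum is $S-1-G_k^*\le g_1+g_2-G_k^*$. This is at most $G_i^*$, because $g_1+g_2\le G_i^*+G_k^*$.
\item[(c)] If both are positive, the row sum is $2(S-1)-G_j^*-G_k^*\le (G_i^*+G_j^*+G_k^*)-G_j^*-G_k^*=G_i^*$, using the half-sum bound.
\end{itemize}

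\emph{Step 3: choose $\lambda^*$.} Let $\bar\lambda=\min\{1,\min_i u_i\}$, where $u_i\ge0$ is the largest $\lambda$ for which row $i$ of~\eqref{eq:G-JE_parameters_1} holds (by Step 2, $u_i\ge0$). Row $i$ is non-decreasing in $\lambda$ because $U_\I\ge L_\I$. Hence all of~\eqref{eq:G-JE_parameters_1} holds at $\bar\lambda$, and it remains to check~\eqref{eq:JE_parameters_2}, i.e.\ $T(\bar\lambda):=\sum_{\I}p_\I(\bar\lambda)\ge S-1$. There are two cases.
\begin{itemize}
\item[(i)] If $\bar\lambda=1$, then $T(1)=2g_1+g_2\ge g_1+g_2\ge S-1$.
\item[(ii)] If $\bar\lambda=u_i<1$ for some $i$, then row $i$ is tight: $p_{\{i,j\}}+p_{\{i,k\}}=G_i^*$. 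Therefore $T(\bar\lambda)=G_i^*+p_{\{j,k\}}(\bar\lambda)\ge G_i^*+(S-1-G_i^*)=S-1$, using Step 1 for the face $\{j,k\}$, whose complementary index is $i$.
\end{itemize}
Thus $\lambda^*=\bar\lambda\in[0,1]$ works.

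I expect the main obstacle to be Step 3. The naive endpoint $\lambda=1$ generally violates~\eqref{eq:G-JE_parameters_1}; for example, when $G_i^*$ is the smallest value, its row sum is $2G_i^*$. The key idea is therefore to stop at the first tight row and use the face-wise lower bound from Step 1 to recover the total-mass inequality. The remaining case checks in Steps 1–2 are routine once $S-1$ is bounded by both $g_1+g_2$ and $\tfrac12\sum_iG_i^*$.
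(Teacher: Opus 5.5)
Your proof is correct and follows the paper's route. It uses the same $U_\I,L_\I$, the same check that $U_\I\ge L_\I$, and the same three-case argument that $\lambda=0$ satisfies \eqref{eq:G-JE_parameters_1} (the paper's $u_i\ge0$); your tight-row estimate $T(u_i)= G_i^*+p_{\N\setminus\{i\}}(u_i)\ge G_i^*+L_{\N\setminus\{i\}}\ge S-1$ is exactly the inequality the paper uses to prove $\ell\le u_i$. The differences are only in packaging, and they help slightly: evaluating at $\bar\lambda=\min\{1,\min_i u_i\}$ and checking $T(1)=2g_1+g_2\ge S-1$ directly replaces the paper's dominance argument for $\min_i u_i\le 1$, and it avoids dividing by $\sum_{\I\in\M_i}(U_\I-L_\I)$, which can vanish.
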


\begin{proof}
Assume~\eqref{eq:G-JECondition} holds and define, for each $\I\in\M$ with
$j\in\N\setminus\I$,
\[
U_\I=\min_{i\in\I}G_i^*,\qquad
L_\I=\max\left\{\sum_{k\in\N}\F_k(0)-G_j^*-1,\;0\right\}.
\]
Here $U_\I$ is the maximum value of $p_\I$ consistent
with~\eqref{eq:G-JE_parameters_1} when all other face masses vanish, and
$L_\I$ is the lower bound on $p_\I$ derived from the combined
constraints~\eqref{eq:G-JE_parameters_1} and~\eqref{eq:JE_parameters_2}: for
$n=3$ and $\I=\{i,k\}$ with $j\in\N\setminus\I$, constraint~\eqref{eq:G-JE_parameters_1}
for index $j$ gives $p_{\{i,j\}}+p_{\{k,j\}}\le G_j^*$; substituting
into~\eqref{eq:JE_parameters_2} yields $p_\I\ge\sum_{l\in\N}\F_l(0)-G_j^*-1$,
hence $L_\I$ as stated.

The inequality $U_\I\ge L_\I$ holds by condition~\eqref{eq:G-JECondition}: for
any $j\in\N\setminus\I$,
\[
\min_{i\in\I}G_i^*+G_j^*
\ge\sum_{l\in\N}G_l^*-\max_{l\in\N}G_l^*
\ge\sum_{l\in\N}\F_l(0)-1,
\]
where the first inequality follows because $\max_l G_l^*\ge G_l^*$ for every $l$,
and the second from~\eqref{eq:G-JECondition}; rearranging gives
$U_\I\ge\sum_{l\in\N}\F_l(0)-G_j^*-1$.

Substituting $p_\I(\lambda)=\lambda U_\I+(1-\lambda)L_\I$
into~\eqref{eq:G-JE_parameters_1} and~\eqref{eq:JE_parameters_2} and solving
for $\lambda$ yields the requirement
\begin{equation}
\label{eq:G-JE_trivariate_lambda}
\ell\;\le\;\lambda\;\le\;u_i\quad\text{for all }i\in\N,
\end{equation}
where
\[
\ell=\max\left\{
\frac{\sum_{k\in\N}\F_k(0)-1-\sum_{\I\in\M}L_{\I}}
     {\sum_{\I\in\M}(U_{\I}-L_{\I})},\;0\right\},
\qquad
u_i=\frac{G_i^*-\sum_{\I\in\M_i}L_{\I}}
         {\sum_{\I\in\M_i}(U_{\I}-L_{\I})}.
\]
We show that $0\le\ell\le u_i\le1$ for every $i\in\N$.

\smallskip
\noindent\textit{Non-negativity of $u_i$.}
For fixed $i\in\N$, the numerator $G_i^*-\sum_{\I\in\M_i}L_\I$ takes one of
the following values according to how many terms in $\sum_{\I\in\M_i}L_\I$ are
strictly positive: $G_i^*$ (both zero); $G_i^*+G_j^*-(\sum_k\F_k(0)-1)$ for
$j\in\N\setminus\{i\}$ (exactly one positive); or
$\sum_k G_k^*-2(\sum_k\F_k(0)-1)$ (both positive). All three are non-negative
by condition~\eqref{eq:G-JECondition}: the first trivially; the second since
$\sum_k\F_k(0)-1\le\sum_k G_k^*-\max_k G_k^*\le G_i^*+G_j^*$ for any
$j\ne i$; the third since $\sum_k\F_k(0)-1\le\tfrac{1}{2}\sum_k G_k^*$ (first
term of~\eqref{eq:G-JECondition} with $n=3$). Hence $u_i\ge0$.

\smallskip
\noindent\textit{Upper bound $u_i\le1$.}
The bound $u_i\le1$ is equivalent to
$G_i^*\le\sum_{\I\in\M_i}U_\I=\min(G_i^*,G_j^*)+\min(G_i^*,G_k^*)$ for
$\{j,k\}=\N\setminus\{i\}$. This sum falls below $G_i^*$ only if
$G_i^*>G_j^*+G_k^*$, a strict dominance condition that can hold for at most
one index $i$ simultaneously. Hence $\min_{i\in\N}u_i\le1$.

\smallskip
\noindent\textit{Non-emptiness: $\ell\le\min_{i\in\N}u_i$.}
Since $\sum_{\I\in\M_i}(U_\I-L_\I)\le\sum_{\I\in\M}(U_\I-L_\I)$ and the
numerator of $u_i$ is non-negative, $u_i\ge(G_i^*-\sum_{\I\in\M_i}L_\I)/
\sum_{\I\in\M}(U_\I-L_\I)$.
By definition, $L_{\N\setminus\{i\}}=\max\{\sum_k\F_k(0)-G_i^*-1,0\}\ge
\sum_k\F_k(0)-G_i^*-1$, so
$\sum_k\F_k(0)-G_i^*-1-L_{\N\setminus\{i\}}\le0$. Writing
$\sum_{\I\in\M}L_\I=\sum_{\I\in\M_i}L_\I+L_{\N\setminus\{i\}}$ and adding
the non-positive term $\sum_k\F_k(0)-G_i^*-1-L_{\N\setminus\{i\}}$ to the
numerator:
\begin{align*}
u_i
&\ge\frac{G_i^*-\sum_{\I\in\M_i}L_\I
         +\sum_k\F_k(0)-G_i^*-1-L_{\N\setminus\{i\}}}
        {\sum_{\I\in\M}(U_\I-L_\I)}
=\frac{\sum_k\F_k(0)-1-\sum_{\I\in\M}L_\I}{\sum_{\I\in\M}(U_\I-L_\I)}.
\end{align*}
Since this lower bound equals the untruncated part of $\ell$, we have
$u_i\ge\ell$ for every $i\in\N$. Therefore $\lambda^*$ exists in
$[\ell,\,\min_{i\in\N}u_i]\subseteq[0,1]$.
\end{proof}

%+++++++++++++++++++++++++++++++++++++++++++++++++++
In the next example, we illustrate several basic choices of the functions $G_i$
and the rich G-JE structures they generate.

\begin{example}
\label{ex:G-JE_example}
\mbox{}
\begin{itemize}

\item[(1)]
Consider the linear functions
\[
G_i(u)=\frac{u-a_i}{b_i-a_i},\quad u\in[a_i,b_i],
\]
extended by $G_i(u)=0$ for $u\in[0,a_i]$ and $G_i(u)=1$ for $u\in[b_i,1]$.
The boundary conditions $G_i(a_i)=0$ and $G_i(b_i)=1$ are satisfied by
construction, and uniquely determine the slope $1/(b_i-a_i)$. Since $G_i$ is
linear on $[a_i,b_i]$, the ratio $(\F_i(x_i)-\F_i(y_i))/
(G_i(\F_i(x_i))-G_i(\F_i(y_i)))$ equals $b_i-a_i$ whenever both
$\F_i(x_i),\F_i(y_i)\in[a_i,b_i]$, and is larger otherwise.
Hence $G_i^*=b_i-a_i$, provided $\F_i$ attains at least one value in
$(a_i,b_i)$. Setting $a_i=0$ and $b_i=\F_i(0)$ gives $G_i(u)=u/\F_i(0)$ and
$G_i^*=\F_i(0)$, recovering the canonical JE.

Now take $a_i>0$ and $b_i=\F_i(0)$, and assume each $\F_i$ attains at least one
value in $[a_i,\F_i(0))$. Then $G_i^*=\F_i(0)-a_i$, and
condition~\eqref{eq:G-JECondition} becomes
\[
\sum_{i\in\N}\F_i(0)-1
\le\min\left\{
\frac{n-2}{n-1}\sum_{i\in\N}\bigl(\F_i(0)-a_i\bigr),\;
\sum_{i\in\N}\bigl(\F_i(0)-a_i\bigr)-\max_{i\in\N}\bigl(\F_i(0)-a_i\bigr)
\right\}.
\]
Writing $\F_i(0)-a_i = G_i^*$ and expanding each case:

\noindent\textit{Case 1} (symmetric regime, first term binding): the condition
reduces to
\[
\sum_{i\in\N}\F_i(0)+(n-2)\sum_{i\in\N}a_i\le n-1.
\]
Compared with~\eqref{eq:JECondition}, the extra term $(n-2)\sum_{i\in\N}a_i$
reflects the additional restriction imposed by truncating the face distributions
at $a_i$ from below.

\noindent\textit{Case 2} (asymmetric regime, second term binding): the condition
becomes
\[
\sum_{i\in\N}a_i+\max_{i\in\N}\bigl(\F_i(0)-a_i\bigr)\le 1,
\]
a constraint that primarily controls the balance between the lower bounds
$\{a_i\}$ and the largest effective face mass.

Setting $a_i=1-q_i$ for parameters $q_i\in[F_i(0),1]$ (so that
$G_i^*=q_i-F_i(0)\ge0$) and using $\F_i(0)=1-F_i(0)$, these two conditions
may equivalently be written as
\begin{align*}
\sum_{i\in\N}(1-q_i)
+\frac{\sum_{i\in\N}(q_i-F_i(0))}{n-1}&\le 1,\\
\sum_{i\in\N}(1-q_i)+\max_{i\in\N}(q_i-F_i(0))&\le 1.
\end{align*}
The second inequality is precisely the necessary and sufficient condition for the
existence of the Tail-ME random vector with probability vector $(q_1,\dots,q_n)$
studied in~\cite{Cheung2017}. The first inequality additionally ensures that the
G-JE variant concentrates its face mass on the lower portion of the box
$B=\{x_{\N}\in[0,\infty)^n:0\le x_i\le F_i^{-1}(q_i)\}$, rather than over the
full $n$-cube $B$.

As a concrete illustration, modify Example~\ref{ex:JE_canonical_example} by
setting $a_i=1/8$ (and retaining $b_i=\F_i(0)=1/2$) for all $i\in\N$.
Then $G_i^*=3/8$, and one verifies that~\eqref{eq:G-JECondition} holds. The
transformed argument is
\[
G_i\left(\F_i(x_i)\right)
=\frac{(1-x_i)/2-1/8}{3/8}
=\left(1-\tfrac{4}{3}x_i\right)_+,
\quad x_i\in[0,1],
\]
where $(t)_+=\max\{t,0\}$ denotes the positive part. Applying
Proposition~\ref{prop:G-JE_trivariate} with $U_\I=3/8$ and $L_\I=1/8$ for
every $\I\in\M$ gives
\[
p_\I(\lambda)
=\tfrac{3}{8}\lambda+\tfrac{1}{8}(1-\lambda)
=\tfrac{\lambda}{4}+\tfrac{1}{8},
\quad\lambda\in\left[\tfrac{1}{6},\,\tfrac{1}{4}\right].
\]
Retaining the copulas $C_{12}$, $C_{13}$, $C_{23}$ from
Example~\ref{ex:JE_canonical_example} and substituting the transformed arguments,
the CDF of $X_\N$ is
\begin{align*}
F_{\N}(x_{\N})=\begin{cases}
{\displaystyle 1-\frac{1}{2}\sum_{i\in\N}(1-x_i)}\\[6pt]
{\displaystyle
+\left(\frac{\lambda}{4}+\frac{1}{8}\right)\Bigl[
\left(1-\tfrac{4}{3}x_1\right)_{+}
\left(1-\tfrac{4}{3}x_2\right)_{+}}\\[4pt]
{\displaystyle\qquad
+\min\left\{\left(1-\tfrac{4}{3}x_1\right)_{+},
             \left(1-\tfrac{4}{3}x_3\right)_{+}\right\}}\\[4pt]
{\displaystyle\qquad
+\max\left\{\left(1-\tfrac{4}{3}x_2\right)_{+}
             +\left(1-\tfrac{4}{3}x_3\right)_{+}-1,\;0\right\}
\Bigr],}
& x_{\N}\in[0,1]^3,\\[6pt]
0, & \text{otherwise.}
\end{cases}
\end{align*}
The effective face support is confined to $[0,3/4)$ in each active coordinate,
strictly smaller than the $[0,1]$ range of the canonical JE; the support of
$X_\N$ is depicted in Figure~\ref{fig:G-JE_ex}.

\begin{figure}[htbp]
\centering
\includegraphics[width=0.75\textwidth]{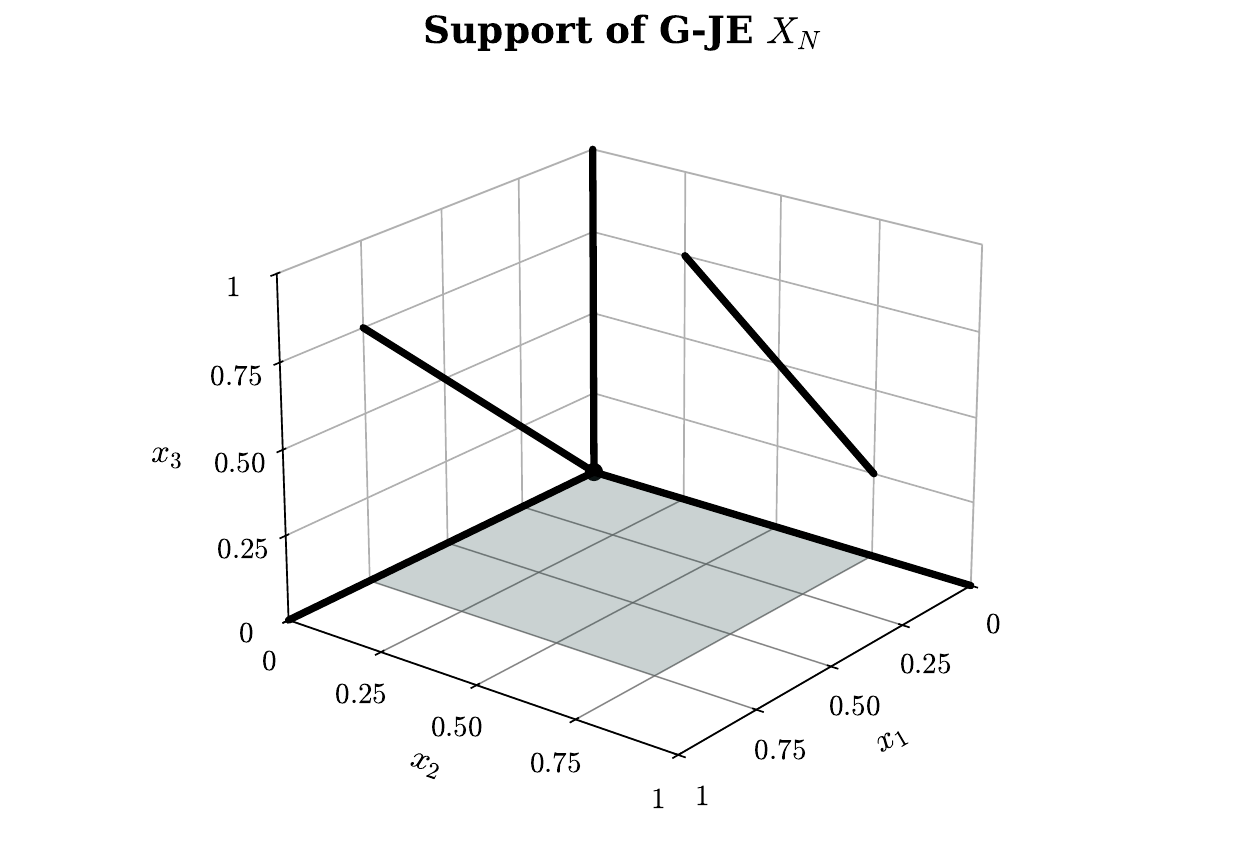}
\caption{Support of the G-JE random vector $X_\N$ with linear truncation
functions $G_i$ at level $a_i=1/8$, restricting the effective face support to
$[0,3/4)$ in each active coordinate.}
\label{fig:G-JE_ex}
\end{figure}

\item[(2)]
Consider the power functions
\[
G_i(u)=\left(\frac{u-a_i}{b_i-a_i}\right)^{\gamma},\quad u\in[a_i,b_i],
\]
with exponent $\gamma>0$, extended by $G_i(u)=0$ for $u\in[0,a_i]$ and
$G_i(u)=1$ for $u\in[b_i,1]$. This recovers the linear case at $\gamma=1$.
For general $\gamma$, the ratio in~\eqref{eq:G-JE_G_star} equals
$(b_i-a_i)^\gamma(u-v)/(u^\gamma-v^\gamma)$ (with $u = s+a_i$, $v = t+a_i$, $s>t\ge0$).
By the mean value theorem, $(u^\gamma-v^\gamma)/(u-v)=\gamma\xi^{\gamma-1}$ for
some $\xi\in(v,u)$, so the ratio equals $(b_i-a_i)^\gamma/(\gamma\xi^{\gamma-1})$.

\noindent\textit{Case $\gamma\in(0,1)$}: since $\gamma-1<0$, the ratio vanishes
as $\xi\to0^+$, which occurs when $v\to a_i^+$. Hence $G_i^*=0$ whenever $\F_i$
attains values in $(a_i,a_i+\varepsilon)$ for some $\varepsilon>0$.
Condition~\eqref{eq:G-JECondition} then reduces to $\sum_{i\in\N}\F_i(0)\le 1$,
the ME condition~\eqref{eq:MEcondition}: the strong concavity of $G_i$ near
$a_i$ forces all face probabilities to vanish, recovering ME as the only feasible
structure.

\noindent\textit{Case $\gamma>1$}: the ratio is minimised as $\xi\to b_i$
(since $\gamma-1>0$ and $\xi^{\gamma-1}$ is increasing), which occurs as
$v\to u^-$ with $u\to b_i$. Setting $a_i=0$ and $b_i=\F_i(0)$ and assuming
$\F_i$ attains values in $(b_i-\varepsilon,b_i)$ for some $\varepsilon>0$, one
obtains $G_i^*=\F_i(0)/\gamma$. Condition~\eqref{eq:G-JECondition} then becomes
\[
\sum_{i\in\N}\F_i(0)
\le\min\left\{
\frac{\gamma(n-1)}{\gamma(n-1)-(n-2)},\;
\frac{\gamma-\max_{i\in\N}\F_i(0)}{\gamma-1}
\right\}.
\]
Both terms are decreasing in $\gamma$ from the canonical JE bound of $n-1$ (at
$\gamma=1^+$) toward $1$ as $\gamma\to\infty$, confirming that larger exponents
impose progressively stronger constraints relative to the canonical case.
\end{itemize}
\end{example}

Another extension to JE comes as an $m$-exclusivity hierarchy in which JE corresponds to the
least restrictive level. Strengthening the exclusion condition of JE to smaller sub-groups of
components yields a nested family of conditions interpolating between JE and ME.

\begin{definition}
\label{def:m_exclusive}
Let $2\le m\le n$. A non-negative random vector $X_{\N}$ is said to be
\emph{$m$-exclusive} if
\[
\F_{I}(0_{I})=0,\text{ or equivalently, }
    \prod_{i\in\I}X_i=0\text{ almost surely},\quad\text{for all }I\subseteq\N\text{ with }|I|=m,
\]
that is, no $m$ components are simultaneously strictly positive almost surely.
The special cases $m=n$ and $m=2$ recover JE and ME, respectively.
\end{definition}

Since $\P(X_i>0,\,i\in J)\le \P(X_i>0,\,i\in I)$ for any $I\subseteq J$,
$m$-exclusivity immediately implies $d$-exclusivity for every $d\ge m$: any
set of $d>m$ components contains a sub-group of size $m$ with zero joint
positivity probability, hence itself has zero joint positivity probability.
Consequently, the $m$-exclusivity conditions form a decreasing chain:
\[
\text{ME} = 2\text{-exclusive}
\;\Rightarrow\;3\text{-exclusive}
\;\Rightarrow\;\cdots
\;\Rightarrow\;n\text{-exclusive} = \text{JE}.
\]

\begin{theorem}
\label{thm:m-exclusivity}
Let $F_1,\dots,F_n$ be marginal distributions on $[0,\infty)$ and let
$2\le m\le n$. An $m$-exclusive random vector $X_{\N}$ with these marginals
exists if and only if
\begin{equation}
\label{eq:m-exclusive_Condition}
\sum_{i\in\N}\F_i(0)\le m-1.
\end{equation}
\end{theorem}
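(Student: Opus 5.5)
The plan is to follow the two-part structure of Theorem~\ref{thm:JE_Characterisation_1}, with one change. For necessity a single Fr\'{e}chet bound is no longer enough: applying it to one $m$-subset only gives $\sum_{i\in I}\F_i(0)\le m-1$ for that subset, not for the whole sum. I will therefore argue through a counting variable. For sufficiency I will use a direct coupling of the positivity indicators rather than the face-based linear programme, which avoids a new extreme-point analysis of the dual polytope.

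\textit{Necessity.} Let $X_\N$ be $m$-exclusive and set $K=\sum_{i\in\N}\mathbbm{1}\{X_i>0\}$. On the event $\{K\ge m\}$ there is a (random) set $I$ with $|I|=m$ and $\prod_{i\in I}X_i>0$. There are only finitely many such $I$, so
\[
\P(K\ge m)\le\sum_{|I|=m}\P\Bigl(\prod_{i\in I}X_i>0\Bigr)=0 .
\]
Hence $K\le m-1$ almost surely. Taking expectations gives $\sum_{i\in\N}\F_i(0)=\E[K]\le m-1$, which is \eqref{eq:m-exclusive_Condition}. For $m=n$ this recovers the necessity part of Theorem~\ref{thm:JE_Characterisation_1}.

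\textit{Sufficiency.} Write $q_i=\F_i(0)$, $s_0=0$, $s_i=q_1+\dots+q_i$, and $S=s_n\le m-1$. Let $U\sim\mathrm{Uniform}[0,1)$. Define
\[
B_i=\mathbbm{1}\bigl\{U+k\in[s_{i-1},s_i)\text{ for some }k\in\mathbb{Z}_{\ge0}\bigr\},
\]
which is the classical ``wrap the intervals around the unit circle'' construction. Since each interval has length $q_i\le1$, at most one integer shift of $U$ falls in it, so $\P(B_i=1)=q_i$. For the dependence constraint, fix a value $U=u$. The points $u+k$ that can lie in $[0,S)$ are those with $k\in\{0,\dots,\lceil S\rceil-1\}$. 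Each such point lies in at most one of the disjoint intervals $[s_{i-1},s_i)$. Therefore $\sum_iB_i\le\lceil S\rceil\le m-1$ pointwise, using that $m-1$ is an integer.

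Next, for each $i$ with $q_i>0$, let $Y_i$ have the conditional law of $X_i$ given $X_i>0$, i.e.\ $\P(Y_i>x)=\F_i(x)/\F_i(0)$. Take $Y_1,\dots,Y_n$ independent of $U$; the copula among the $Y_i$ is arbitrary, mirroring the free copulas $C_\I$. Set $X_i=B_iY_i$, with $X_i\equiv0$ when $q_i=0$. Then for $x\ge0$,
\[
\P(X_i>x)=q_i\,\F_i(x)/\F_i(0)=\F_i(x),
\]
so the marginals are correct. Moreover, at most $m-1$ of the $X_i$ are positive, so every $m$-subset has $\prod_{i\in I}X_i=0$ almost surely, and $X_\N$ is $m$-exclusive.

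The only delicate point is the covering bound $\sum_iB_i\le\lceil S\rceil$, in particular the careful handling of half-open intervals and the shifts $k$. Once that is settled, the rest is routine. As a consistency check, the case $m=n$ reproduces Theorem~\ref{thm:JE_Characterisation_1} and the case $m=2$ reproduces \eqref{eq:MEcondition}.
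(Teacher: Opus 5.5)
Your proof is correct. The necessity half is the paper's argument. The paper simply asserts $\sum_{i\in\N}\mathds{1}_{\{X_i>0\}}\le m-1$ almost surely, and your union bound over the finitely many $m$-subsets supplies the justification it leaves implicit.

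The sufficiency half takes a genuinely different route. The paper reruns the face construction of Theorem~\ref{thm:JE_Characterisation_1} with $\M$ replaced by $\M^{(m)}=\{\I\subset\N:2\le|\I|\le m-1\}$. It then settles existence of admissible $p_\I$ by LP duality, asserting that the dual optimum sits at $r^*=((m-2)/(m-1),\dots,(m-2)/(m-1))$ or at a permutation of $(1,\dots,1,0)$. However, it only checks that these points are dual feasible, and that bounds the primal maximum from above. The direction sufficiency actually needs, namely primal maximum at least $\sum_{i\in\N}\F_i(0)-1$, rests on the extreme-point claim, which is true but not proved there.

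Your wrap-around coupling sidesteps this completely and is elementary. Put $p_\I=\P(\{i:B_i=1\}=\I)$. These are explicit coefficients on $\M^{(m)}$ satisfying \eqref{eq:JE_parameters_1} and \eqref{eq:JE_parameters_2}; the latter holds because the origin mass $\P(B_i=0,\ i\in\N)$ is automatically non-negative. So your vector is itself an instance of the paper's face-based construction over $\M^{(m)}$, and it supplies the lower bound on the primal maximum that the LP argument needs.

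What the paper's route buys is structure. It describes the whole polytope of admissible face masses and lets the copulas $C_\I$ be chosen independently on each face. It also identifies the optimal value $\min\{\frac{m-2}{m-1}\sum_i\F_i(0),\,\sum_i\F_i(0)-\max_i\F_i(0)\}$, which is what the G-variant's condition \eqref{eq:G-m-exclusive_Condition} in Corollary~\ref{cor:m_exclusive_G_star} is built from. Your coupling exhibits one feasible allocation, which is all the present theorem needs.

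One minor point: with a single vector $(Y_1,\dots,Y_n)$, all your face copulas are margins of one $n$-copula. Drawing an independent $Y^{(\I)}$ for each realised face recovers the paper's freedom if you want it.
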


\begin{proof}
\textit{Necessity.}
If $X_\N$ is $m$-exclusive, then no $m$ components are simultaneously positive,
so $\sum_{i\in\N}\mathds{1}_{\{X_i>0\}}\le m-1$ almost surely. Taking expectations
gives $\sum_{i\in\N}\F_i(0)\le m-1$.

\medskip
\noindent\textit{Sufficiency.}
Assume~\eqref{eq:m-exclusive_Condition} holds. The proof proceeds exactly as
that of Theorem~\ref{thm:JE_Characterisation_1}, with the face collection
$\M$ replaced by $\M^{(m)}=\{I\subset\N:2\le|I|\le m-1\}$. Restricting mass
to faces of size at most $m-1$ ensures that no $m$ components are ever
simultaneously positive, thereby enforcing $m$-exclusivity by construction.

The feasibility argument (Steps~1--4 of the proof of
Theorem~\ref{thm:JE_Characterisation_1}) applies to the linear program
with face collection $\M^{(m)}$. The dual constraint $\sum_{i\in\I}r_i\ge|\I|-1$
for $\I\in\M^{(m)}$ is satisfied at the symmetric vertex
$r^*=((m-2)/(m-1),\dots,(m-2)/(m-1))$, since $|\I|(m-2)/(m-1)\ge|\I|-1$ holds
for all $|\I|\le m-1$, and at each permutation of $(1,\dots,1,0)$. By strong
duality, the maximum of $\sum_{\I\in\M^{(m)}}(|\I|-1)p_\I$ equals
\[
\min\left\{\frac{m-2}{m-1}\sum_{i\in\N}\F_i(0),\;
\sum_{i\in\N}\F_i(0)-\max_{i\in\N}\F_i(0)\right\}.
\]
In both cases, this minimum is at least $\sum_{i\in\N}\F_i(0)-1$ under
condition~\eqref{eq:m-exclusive_Condition}: the second term yields the trivial
bound $\max_{i\in\N}\F_i(0)\le1$, while the first requires
$\sum_{i\in\N}\F_i(0)\le m-1$.
\end{proof}

\begin{corollary}
\label{cor:m_exclusive_G_star}
Let $F_1,\dots,F_n$ be marginal distributions on $[0,\infty)$, let $2\le m\le n$,
and let $G_1,\dots,G_n$ be functions as in Definition~\ref{def:g-JE}. A G
$m$-exclusive random vector $X_\N$ with marginals $F_1,\dots,F_n$ exists if and
only if
\begin{equation}
\label{eq:G-m-exclusive_Condition}
\sum_{i\in\N}\F_i(0)-1
\le\min\left\{
\frac{m-2}{m-1}\sum_{i\in\N}G_i^*,\;
\sum_{i\in\N}G_i^*-\max_{i\in\N}G_i^*
\right\}.
\end{equation}
\end{corollary}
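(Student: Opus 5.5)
The plan is to combine the proof of Theorem~\ref{thm:G_JE_characterization} with the face restriction used in Theorem~\ref{thm:m-exclusivity}. A G $m$-exclusive vector is read as a G-JE construction in which mass is only placed on faces $E_\I$ with $\I\in\M^{(m)}=\{\I\subset\N:2\le|\I|\le m-1\}$. The face probabilities are given by~\eqref{eq:G-JEProbFaces}, the axis probabilities by~\eqref{eq:G-JEProbAxes} with $s_i=\sum_{\I\in\M^{(m)}_i}p_\I$, and the origin mass by~\eqref{eq:JEProbOrigin} with $\M$ replaced by $\M^{(m)}$. Because no face of size $\ge m$ carries mass, $m$-exclusivity holds by construction. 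The marginals are reproduced exactly as in Step~2, since $G_i(\F_i(0))=1$ and copulas have uniform margins.

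Next I verify validity. The checks in the proof of Theorem~\ref{thm:G_JE_characterization} are local to each axis and each face, so they do not depend on which faces are active. Each face term is a valid DDF. The axis function $h_i(u)=u-s_iG_i(u)$ is non-decreasing with $h_i(0)=0$ if and only if $s_i\le G_i^*$, which is~\eqref{eq:G-JE_parameters_1}. The origin mass is non-negative if and only if $\sum_{\I\in\M^{(m)}}(|\I|-1)p_\I\ge\sum_i\F_i(0)-1$. Existence therefore reduces to the linear program~\eqref{sys:original}, with $\M^{(m)}$ in place of $\M$ and $G_i^*$ in place of $\F_i(0)$. Its dual minimises $\sum_i G_i^*r_i$ subject to $r_i\ge0$ and $\sum_{i\in\I}r_i\ge|\I|-1$ for $\I\in\M^{(m)}$.

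The main obstacle is identifying the dual optimum as
\[
\min\left\{\tfrac{m-2}{m-1}\textstyle\sum_iG_i^*,\ \sum_iG_i^*-\max_iG_i^*\right\}.
\]
Both candidates are dual feasible. The symmetric point $r^*=\frac{m-2}{m-1}\mathbf{1}$ works because $|\I|\le m-1$, and each permutation of $(1,\dots,1,0)$ works because every constraint loses at most one unit. Feasibility gives only an upper bound on the dual minimum, so I must also show no other vertex does better. The plan is to exhibit primal solutions with matching value.

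Let $j$ attain $\max_iG_i^*$.
\begin{itemize}
\item[(a)] When $(m-2)\max_kG_k^*\le\sum_{k\ne j}G_k^*$, spread mass over faces of size exactly $m-1$ so that every constraint~\eqref{eq:G-JE_parameters_1} is tight. This yields primal value $\frac{m-2}{m-1}\sum_iG_i^*$. The point is a fractional "$(m-1)$-uniform hypergraph degree" realisation, which exists by a standard greedy or majorisation argument under this condition.
\item[(b)] Otherwise, use faces containing $j$ paired with the others so that all $i\ne j$ are saturated. This yields value $\sum_{i\ne j}G_i^*$.
\end{itemize}
I expect the case split and the realisation in (a) to require the most care, especially for $m=2$, where $\M^{(2)}=\emptyset$ and both terms should collapse to give the ME-type bound $0$.

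Once the primal value equals the displayed minimum, feasibility of $p_\I\ge0$ satisfying both constraints holds if and only if that minimum is at least $\sum_i\F_i(0)-1$. This is precisely~\eqref{eq:G-m-exclusive_Condition}. Necessity follows because any G $m$-exclusive vector is, by definition, of this form, and its coefficients must be primal feasible.
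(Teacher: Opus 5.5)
Your reduction is the same as the paper's. You restrict the G-JE construction to $\M^{(m)}$, reduce existence to the linear program with capacities $G_i^*$, and compare its optimum with $\sum_{i\in\N}\F_i(0)-1$. You are right that dual feasibility of $r^*$ and of the permutations of $(1,\dots,1,0)$ only bounds the optimum from above. The paper asserts its vertex analysis without proof, so matching primal solutions are exactly what is needed. Your case (a) is sound. When $(m-1)\max_kG_k^*\le\sum_kG_k^*$ and $m\ge3$, the vector $(m-1)G^*/\sum_kG_k^*$ lies in the hypersimplex $\{x\in[0,1]^n:\sum_ix_i=m-1\}$. Its vertices are indicators of $(m-1)$-sets, which gives weights making every constraint tight. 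For $m=2$ the program is empty and its value $0$ already equals the first term.

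Case (b), however, fails as you describe it when $m\ge4$. With pair faces $\{i,j\}$ carrying $p_{\{i,j\}}=G_i^*$, constraint \eqref{eq:G-JE_parameters_1} for the index $j$ becomes $\sum_{i\ne j}G_i^*\le G_j^*$. The case hypothesis $(m-2)G_j^*>\sum_{i\ne j}G_i^*$ implies this only when $m=3$. For instance, take $n=m=4$ and $G^*=(1,0.6,0.6,0)$. This falls in case (b), and the pairs $\{1,2\},\{1,3\}$ at mass $0.6$ give $s_1=1.2>1$. Yet the target value $1.2$ is attained by $p_{\{1,2,3\}}=0.6$.

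The missing step is to let $j$ share faces of size up to $m-1$. Set $W=\max\{\max_{i\ne j}G_i^*,\tfrac{1}{m-2}\sum_{i\ne j}G_i^*\}$. Then $(G_i^*)_{i\ne j}/W$ lies in the integral polytope $\{x\in[0,1]^{n-1}:\sum x\le m-2\}$. So you can write $(G_i^*)_{i\ne j}=\sum_T q_T\mathbf{1}_T$ over nonempty $T\subseteq\N\setminus\{j\}$ with $|T|\le m-2$ and $\sum_Tq_T\le W$. Now set $p_{T\cup\{j\}}=q_T$. This saturates every $i\ne j$. It gives $s_j\le W\le G_j^*$ by maximality of $G_j^*$ together with the case hypothesis. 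The objective is $\sum_T|T|\,q_T=\sum_{i\ne j}G_i^*$. With this repair your primal certificates close the duality gap, and the rest of your argument goes through.
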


\begin{proof}
The proof follows the argument of Theorem~\ref{thm:G_JE_characterization},
applied to the restricted face collection $\M^{(m)}=\{I\subset\N:2\le|I|\le m-1\}$
in place of $\M$, with $G_i^*$ in place of $\F_i(0)$ in the linear program, and
with $m$ taking the role of $n$ throughout the dual analysis.
\end{proof}
\begin{remark}
\label{rm:m_exclusive_canonical}
The $m$-exclusivity hierarchy admits a transparent interpretation within the
canonical JE framework.

\textit{Connection to face probabilities.}
A canonical JE random vector $X_\N$ is $m$-exclusive if and only if $p_\I=0$
for every $\I\in\M$ with $|\I|\ge m$. For the forward direction: if $X_\N$ is
$m$-exclusive and $\J\in\M$ with $|\J|\ge m$, choose any $I\subseteq\J$ with
$|I|=m$; then
\[
p_\J\le\sum_{\substack{\K\in\M\\\K\supseteq I}}p_\K
=\P(X_i>0,\;i\in I)=0.
\]
Conversely, if $p_\J=0$ for all $|\J|\ge m$, then for any $I\subseteq\N$ with
$|I|=m$:
\[
\P(X_i>0,\;i\in I)
=\sum_{\substack{\K\in\M\\\K\supseteq I,\,|\K|\ge m}}p_\K=0,
\]
confirming $m$-exclusivity. As $m$ decreases from $n$ to $2$, faces of
successively larger dimension are suppressed: at $m=n$ the full canonical JE
support is recovered; at $m=2$ all faces are zero ($p_\I=0$ for all $\I\in\M$),
consistent with Remark~\ref{rm:JE_Canonical_Remark}-(1), and mass concentrates
entirely on axes and origin, recovering ME.

\textit{Parallel structural properties.}
The results of Remark~\ref{rm:JE_Canonical_Remark} extend to $m$-exclusive
vectors with $\M$ replaced by $\M^{(m)}$ and~\eqref{eq:JECondition} replaced
by~\eqref{eq:m-exclusive_Condition}. Items~(1)--(4) carry over without
modification. For item~(5), suppose $\sum_{i\in\N}\F_i(0)=m-1$. Summing
\eqref{eq:JEProbAxes} over all $i\in\N$ at $x_i=0$ with $\M^{(m)}$ in place
of $\M$, and using $|\I|\le m-1$ for every $\I\in\M^{(m)}$, the same argument
as Remark~\ref{rm:JE_Canonical_Remark}-(5) yields
\[
(m-1)\,c_0+(m-2)\sum_{i\in\N}c_i\le 0,
\]
where $c_0=\P(X_i=0,\,i\in\N)$ and $c_i=\P(X_i>0;\,X_j=0,\,j\ne i)$. For
$m\ge3$, both coefficients $m-1\ge2$ and $m-2\ge1$ are strictly positive,
forcing $c_0=0$ and $c_i=0$ for every $i\in\N$; mass concentrates entirely
on $\M^{(m)}$ with $\sum_{\I\in\M^{(m)}}p_\I=1$. For $m=2$ (ME), the
coefficient of $\sum_i c_i$ vanishes and only $c_0=0$ is implied, with axis
masses remaining unrestricted — consistent with the ME structure. The G
$m$-exclusive case follows identically via Corollary~\ref{cor:m_exclusive_G_star},
with $\F_i(0)$ replaced by $G_i^*$ throughout.

\textit{Parametric families.}
Proposition~\ref{prop:JE_trivariate} provides a one-parameter family for
the trivariate JE ($n=m=3$). For $n=3$ the hierarchy collapses to just ME
($m=2$) and JE ($m=3$), with no non-trivial intermediate level. Non-trivial
intermediate levels first arise at $n=4$ with $m=3$: here $|\M^{(3)}|=6$
unknowns are subject to 4 constraints from~\eqref{eq:JE_parameters_1} and 1
from~\eqref{eq:JE_parameters_2}, so the feasible polytope generically has
dimension greater than one and the one-parameter representation of
Proposition~\ref{prop:JE_trivariate} does not extend. The LP feasibility
argument of Theorem~\ref{thm:m-exclusivity} nonetheless guarantees existence
of at least one admissible solution under~\eqref{eq:m-exclusive_Condition},
with full copula freedom within each admissible face.

A natural further refinement allows specific faces of size $m-1$ to be
deactivated while retaining others, yielding a finer hierarchy governed by
both the global condition~\eqref{eq:m-exclusive_Condition} and local
sub-vector conditions.
\end{remark}
%+++++++++++++++++++++++++++++++++++++++++++++++++++
\section{Further Discussion}
\label{sec:further_discussion}

The preceding sections developed a comprehensive theory of JE and its
extensions. We characterised the admissible marginals, derived an explicit
canonical construction, introduced the G-JE generalisation via marginal
distortion functions, and established the $m$-exclusivity hierarchy
interpolating between ME and JE. We now address four complementary aspects
that complete the picture.

\subsection{Threshold and reflected extensions}
\label{subsec:threshold}

The primary variant studied throughout is \emph{JE from below}, defined for
non-negative $X_\N$ by $\prod_{i\in\N}X_i=0$ almost surely. A symmetric
notion applies to non-positive random vectors: $Y_\N$ is \emph{JE from above}
if $\P(Y_i<0,\,i\in\N)=0$, i.e.\ no $n$ components are simultaneously
strictly negative. Setting $X_i=-Y_i\ge0$, $Y_\N$ is JE from above if and
only if $X_\N$ is JE from below, and condition~\eqref{eq:JECondition} becomes
\[
\sum_{i\in\N}\P(Y_i<0)\le n-1.
\]
All results of Sections~\ref{sec:preliminaries}--\ref{sec:main_results}
transfer immediately by this reflection. The same argument applies at every
level of the $m$-exclusivity hierarchy: a non-positive vector $Y_\N$ is
\emph{$m$-exclusive from above} if $\P(Y_i<0,\,i\in I)=0$ for all
$I\subseteq\N$ with $|I|=m$, which via $X_i=-Y_i$ is equivalent to $X_\N$
being $m$-exclusive from below with existence condition
$\sum_{i\in\N}\P(Y_i<0)\le m-1$.

These cases are unified by a threshold formulation. For thresholds
$l_\N=(l_i,\,i\in\N)\in\mathbb{R}^n$, a random vector $Z_\N$ is
\emph{$m$-exclusive above level $l_\N$} if
\[
\P(Z_i>l_i,\;i\in I)=0
\quad\text{for all }I\subseteq\N\text{ with }|I|=m.
\]
The excess vector $W_i=(Z_i-l_i)_+\ge0$ is then $m$-exclusive from below
with marginal tail probability $\F_{W_i}(0)=\P(Z_i>l_i)$, and the
existence condition for $Z_\N$ with prescribed marginals $\{F_{Z_i}\}$ is
\begin{equation}
\label{eq:m-exclusive_threshold}
\sum_{i\in\N}\P(Z_i>l_i)\le m-1.
\end{equation}
All canonical and G-constructions of Section~\ref{sec:main_results} apply
directly to $W_\N$ and transfer to $Z_\N$ via the shift $Z_i=W_i+l_i$.
The case $m=n$ with $l_\N=0_\N$ and $Z_\N\ge0$ recovers JE from below; the
reflected variant (JE from above on $Y_\N\le0$) is handled via $Z_i=-Y_i$
with $l_\N=0_\N$, as described above, rather than directly through the
threshold formulation. Setting $m=2$ and $l_i=F_{Z_i}^{-1}(q_i)$ for
parameters $q_i\in(0,1)$, condition~\eqref{eq:m-exclusive_threshold} reduces
to $\sum_{i\in\N}(1-q_i)\le1$, the existence condition for ME on the excesses
$W_i=(Z_i-l_i)_+$. The corresponding G-variant
(Corollary~\ref{cor:m_exclusive_G_star}) reduces precisely to the existence
condition for the Tail-ME random vector with probability vector
$(q_1,\dots,q_n)$ studied in~\cite{Cheung2017}, as established in
Example~\ref{ex:G-JE_example}-(1).

\subsection{Aggregate risk and stop-loss implications}
\label{subsec:aggregate}

A central motivation for extremal dependence modelling in insurance
mathematics is the derivation of distributional bounds on aggregate losses.
For a random vector $X_\N$ with fixed non-negative marginals $F_1,\dots,F_n$,
the aggregate $S=\sum_{i\in\N}X_i$ drives excess-of-loss premiums and
solvency capital requirements. Its variance decomposes as
\[
\mathrm{Var}(S)
=\sum_{i\in\N}\mathrm{Var}(X_i)
+2\!\sum_{\substack{i,j\in\N\\i<j}}\!\mathrm{Cov}(X_i,X_j),
\]
where $\mathrm{Cov}(X_i,X_j)=\E[X_iX_j]-\E[X_i]\E[X_j]$. Since the marginals
are held fixed, $\E[X_i]$ and $\mathrm{Var}(X_i)$ are invariant across
dependence structures, and the variance of $S$ is determined entirely by the
pairwise covariances. Under ME, $X_iX_j=0$ almost surely for every $i\ne j$,
so $\E_{ME}[X_iX_j]=0$ and
\[
\mathrm{Cov}_{ME}(X_i,X_j)=-\E[X_i]\E[X_j]\le 0,
\]
giving
\[
\mathrm{Var}(S_{ME})
=\sum_{i\in\N}\mathrm{Var}(X_i)
-2\!\sum_{\substack{i,j\in\N\\i<j}}\!\E[X_i]\E[X_j]
\;\le\;\sum_{i\in\N}\mathrm{Var}(X_i).
\]
Under any $m$-exclusive structure with $m\ge3$, the pairwise constraint is
lifted: no two components are required to satisfy $X_iX_j=0$ almost surely,
and since $X_i,X_j\ge0$ one has $\E[X_iX_j]\ge0$ for all $i\ne j$. The
difference in aggregate variances between any $m$-exclusive ($m\ge3$) and
ME distributions with the same marginals is therefore
\begin{equation}
\label{eq:JE_var_bound}
\mathrm{Var}(S)-\mathrm{Var}(S_{ME})
=2\!\sum_{\substack{i,j\in\N\\i<j}}\!
\bigl(\mathrm{Cov}(X_i,X_j)-\mathrm{Cov}_{ME}(X_i,X_j)\bigr)
=2\!\sum_{\substack{i,j\in\N\\i<j}}\!\E[X_iX_j]\;\ge\;0,
\end{equation}
where the second equality uses $\E_{ME}[X_iX_j]=0$ and the inequality
follows from $X_i,X_j\ge0$. Relaxing ME to any $m$-exclusive structure with
$m\ge3$ therefore weakly increases aggregate variance for fixed marginals,
with equality if and only if $\E[X_iX_j]=0$ for all $i\ne j$, i.e.\ whenever
the structure effectively reduces to ME. When the ME
condition~\eqref{eq:MEcondition} fails but the JE
condition~\eqref{eq:JECondition} holds, the lower
bound~\eqref{eq:JE_parameters_2} forces
$\sum_{\I\in\M}(|\I|-1)p_\I\ge\sum_{i\in\N}\F_i(0)-1>0$, so at least one
face $E_\I$ with $|\I|\ge2$ carries strictly positive mass $p_\I>0$. For
any $i,j\in\I$, $X_i>0$ and $X_j>0$ simultaneously on $E_\I$, and since
$\F_i(0)\ge p_\I>0$ implies $\E[X_i]>0$, we obtain
\[
\E[X_iX_j]
\;\ge\;\E\!\left[X_iX_j\,\mathbf{1}_{E_\I}\right]
=p_\I\,\E\!\left[X_iX_j\,\big|\,E_\I\right]>0,
\]
where the strict inequality holds because $X_i>0$ and $X_j>0$ almost surely
on $E_\I$ and both conditional means are strictly positive. Hence the
inequality in~\eqref{eq:JE_var_bound} is strict for any JE distribution
whose marginals violate the ME condition.

The $m$-exclusivity level also governs the higher-order joint structure of
$S$. Under $m$-exclusivity, $\prod_{i\in I}X_i=0$ almost surely for every
$I\subseteq\N$ with $|I|=m$; since $X_i\ge0$, this implies
$\E[\prod_{i\in I}X_i^{r_i}]=0$ for any $r_i>0$ and all $|I|\ge m$.
Consequently, $m$-exclusivity eliminates all joint moments of order $m$ and
above, directly constraining the cumulants of $S$ of corresponding order.
Under $3$-exclusivity, all triple cross-moments $\E[X_iX_jX_k]$ vanish,
governing the third cumulant of $S$ in addition to the variance; under
$4$-exclusivity, all quartic joint cross-moments vanish as well. This
systematic suppression of higher-order cross-moments as $m$ decreases from
$n$ to $2$ provides a principled and tunable mechanism for controlling the
tail behaviour of the aggregate.

For excess-of-loss reinsurance, the stop-loss premium
$\Pi(d)=\E[(S-d)_+]$ depends on the full distribution of $S$ rather than
its moments alone. When condition~\eqref{eq:MEcondition} holds,
\cite{Cheung2014} establishes that ME minimises $\Pi(d)$ for every retention
level $d\ge0$ among all dependence structures with the given marginals, in
the sense of the convex stochastic order $S_{ME}\le_{cx}S$. This positions
ME as the natural lower benchmark for stop-loss calculations. When the ME
condition fails but the JE condition~\eqref{eq:JECondition} holds, ME is
unavailable as a reference structure. A natural candidate for the stop-loss
minimiser within the JE class is the distribution that suppresses face masses
as far as constraints~\eqref{eq:JE_parameters_1} and \eqref{eq:JE_parameters_2}
permit, minimising $\E[X_iX_j]$ for each pair and approaching ME as closely
as the marginals allow. Whether this candidate achieves a convex ordering
within the JE class, and whether the $m$-exclusivity level $m$ induces a
monotone ordering of stop-loss premiums across the hierarchy for fixed
marginals, are natural and actuarially important open problems.

\subsection{G-JE and distortion operators}
\label{subsec:distortion}

The G-JE construction has a natural interpretation in terms of distortion
operators, a concept central to actuarial risk theory~\citep{Wang1996}. For
each $i\in\N$, the composition $h_i=G_i\circ\F_i$ defines a valid
survival function on $[0,\infty)$: it satisfies $h_i(0)=G_i(\F_i(0))=1$
by the boundary condition $b_i\le\F_i(0)$; it converges to zero as
$x_i\to\infty$ since $\F_i(\infty)=0\le a_i$ and $G_i(0)=0$; and it is
right-continuous and non-increasing. Moreover, since $a_i\ge0$ implies
$G_i(0)=0$, and since $b_i\le\F_i(0)\le1$ implies $G_i(1)=1$, the
function $G_i:[0,1]\to[0,1]$ satisfies the boundary conditions
$G_i(0)=0$ and $G_i(1)=1$ of a distortion function in the sense
of~\cite{Wang1996}, monotonically transforming the survival probabilities of
$X_i$.

Under this identification, a G-JE random vector is a canonical JE random
vector in which the standardised within-face survival function
$\F_i(x_i)/\F_i(0)$ of each component is replaced by the distorted
counterpart $h_i(x_i)=G_i(\F_i(x_i))$. The quantity $G_i^*$ represents
the effective tail mass available for face allocation under the distorted
survival function, and the existence condition~\eqref{eq:G-JECondition} is
condition~\eqref{eq:JECondition} restated in terms of $\{G_i^*\}$ rather than
$\{\F_i(0)\}$. The canonical JE is recovered by setting
$G_i(u)=u/\F_i(0)$ on $[0,\F_i(0)]$ and $G_i(u)=1$ for
$u>\F_i(0)$, giving $G_i^*=\F_i(0)$. Concave distortions
($\gamma\in(0,1)$ in Example~\ref{ex:G-JE_example}-(2)) over-weight the
lower tail of the within-face distribution and yield $G_i^*=0$, causing all
face probabilities to vanish; condition~\eqref{eq:G-JECondition} then reduces
to the ME condition~\eqref{eq:MEcondition}, and the G-JE structure degenerates
to ME. Convex distortions ($\gamma>1$) under-weight the lower tail, giving
$G_i^*=\F_i(0)/\gamma<\F_i(0)$ and tightening the existence
condition relative to canonical JE. This connection links the G-JE framework
naturally to Wang-transform premium principles and to the broader literature
on distortion risk measures, suggesting potential embedding within
distortion-based capital allocation models.

\subsection{Connection to joint mixability}
\label{subsec:JM}

Another prominent extension of counter-monotonicity is \emph{joint
mixability} (JM)~\citep{Wang2016}: a random vector $Y_\N$ is JM if
$\sum_{i\in\N}Y_i=c$ almost surely for some constant $c\in\mathbb{R}$,
taken without loss of generality to be zero. Although JE and JM may appear
structurally unrelated, they share the geometric feature that both supports
are $(n-1)$-dimensional subsets of $\mathbb{R}^n$. Formally, define
\[
W_\mathrm{JE}=\Bigl\{x_\N\in[0,\infty)^n:\min_{i\in\N}x_i=0\Bigr\},
\qquad
W_\mathrm{JM}=\Bigl\{y_\N\in\mathbb{R}^n:\textstyle\sum_{i\in\N}y_i=0\Bigr\},
\]
where $W_\mathrm{JE}$ is the piecewise-linear boundary of the non-negative
orthant and $W_\mathrm{JM}$ is the zero-sum hyperplane. The centering map
$\Phi:W_\mathrm{JE}\to W_\mathrm{JM}$, defined by
\[
\Phi(x_\N)_i=x_i-\xb,\quad i\in\N,\qquad
\xb=\tfrac{1}{n}\sum_{j\in\N}x_j,
\]
has inverse $\Phi^{-1}:W_\mathrm{JM}\to W_\mathrm{JE}$ given by
$\Phi^{-1}(y_\N)_i=y_i-\min_{j\in\N}y_j$. That $\Phi^{-1}$ maps into
$W_\mathrm{JE}$ follows since all components $y_i-\min_j y_j\ge0$ and
$\min_i(y_i-\min_j y_j)=0$. Bijectivity is confirmed by direct computation:
for $x_\N\in W_\mathrm{JE}$ (so $\min_k x_k=0$),
\[
\Phi^{-1}(\Phi(x_\N))_i
=(x_i-\xb)-\min_{k\in\N}(x_k-\xb)
=x_i-\xb-(\min_k x_k-\xb)
=x_i,
\]
and for $y_\N\in W_\mathrm{JM}$ (so $\sum_k y_k=0$),
\[
\Phi(\Phi^{-1}(y_\N))_i
=(y_i-\min_j y_j)
-\tfrac{1}{n}\sum_{k\in\N}(y_k-\min_j y_j)
=y_i-\tfrac{1}{n}\sum_k y_k
=y_i.
\]
Since $\Phi$ is affine and $\Phi^{-1}$ is continuous (as a composition of
continuous functions, the min being continuous), $\Phi$ is a homeomorphism
between $W_\mathrm{JE}$ and $W_\mathrm{JM}$.

The homeomorphism induces a distributional correspondence via pushforward:
if $X_\N$ is JE then $Y_\N:=\Phi(X_\N)=(X_i-\Xb,\,i\in\N)$ is JM,
since $\sum_iY_i=\sum_iX_i-n\Xb=0$ almost surely; and if $Y_\N$ is JM
then $X_\N:=\Phi^{-1}(Y_\N)=(Y_i-\min_{j\in\N}Y_j,\,i\in\N)$ is JE, since
$\min_{i\in\N}X_i=0$ almost surely. An important caveat is that $\Phi$ does
not preserve marginals: the distribution of $Y_i=X_i-\Xb$ depends on
the full joint law of $X_\N$ rather than solely on $F_i$, so the
homeomorphism does not induce a correspondence between JE distributions with
prescribed marginals $\{F_i\}$ and JM distributions with a prescribed
marginal structure. This observation highlights a key analytical advantage
of JE over JM: JE existence is governed by the single tractable
condition~\eqref{eq:JECondition}, whereas JM existence for prescribed
marginals requires substantially more complex analytical conditions;
see~\cite{Wang2016} for a detailed treatment.

For canonical JE random vectors, the homeomorphism carries additional
algebraic structure. On face $E_\I$, where $X_j=0$ for $j\in\N\setminus\I$
and $X_i>0$ for $i\in\I$, the image under $\Phi$ satisfies
\[
\Phi(x_\N)_j=-\tfrac{1}{n}\sum_{k\in\I}x_k
\quad\text{for all }j\in\N\setminus\I,
\]
so all components indexed by $\N\setminus\I$ are equal on $\Phi(E_\I)$. This
block-equality structure in the JM image of each canonical JE face facilitates
explicit computation of JM distributions arising from the canonical
construction and suggests that the bijection may be exploited to transfer
distributional results between the two frameworks. Figure~\ref{fig:JEvsJM}
illustrates $W_\mathrm{JE}$, $W_\mathrm{JM}$, and the action of $\Phi$
for $n=3$.

\begin{figure}[htbp]
\centering
\includegraphics[width=0.75\textwidth]{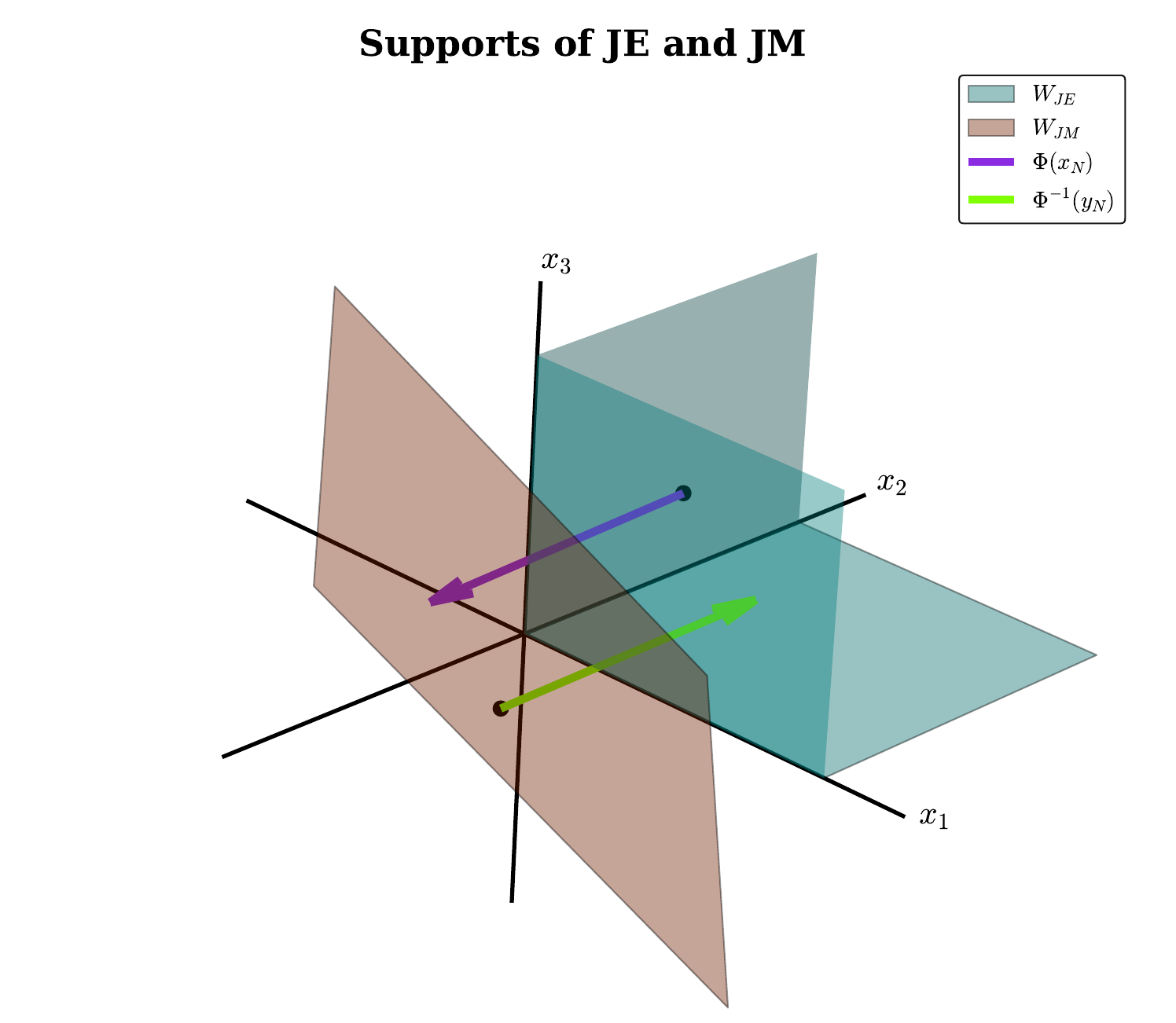}
\caption{The support sets $W_\mathrm{JE}$ and $W_\mathrm{JM}$ for JE and
JM random vectors ($n=3$), and the action of the homeomorphism $\Phi$.}
\label{fig:JEvsJM}
\end{figure}
%+++++++++++++++++++++++++++++++++++++++++++++++++++
\section{Conclusions}
\label{sec:conclusions}

This paper introduces joint exclusivity (JE) as a tractable framework for
extremal negative dependence in arbitrary dimensions. By replacing the
pairwise constraint of mutual exclusivity with the global requirement that
not all $n$ components are simultaneously positive, JE yields a strictly
richer class of dependence structures while retaining a complete and
explicit characterisation: a JE random vector with prescribed marginals
$F_1,\dots,F_n$ exists if and only if $\sum_{i\in\N}\F_i(0)\le n-1$, a
condition whose gap from the mutual exclusivity bound $\sum_{i\in\N}\F_i(0)\le 1$
widens with dimension. The canonical construction separates the allocation
of probability mass across the faces of the support---governed by a
finite-dimensional linear system in the face-mass coefficients---from the
specification of within-face dependence through freely chosen copulas.
In the trivariate case this system reduces to a one-parameter family,
yielding a particularly transparent and implementable model.

Two extensions enrich the framework further. The generalised JE (G-JE)
replaces the standardised within-face survival functions by distorted
counterparts through Wang-type distortion operators $G_i$, accommodating
restricted or reshaped face supports and connecting JE to distortion-based
premium principles. The $m$-exclusivity hierarchy, parametrised by
$2\le m\le n$, interpolates between mutual exclusivity ($m=2$)
and JE ($m=n$), each level characterised by the single inequality
$\sum_{i\in\N}\F_i(0)\le m-1$ and admitting a corresponding G-variant.
Together, these extensions provide a unified and flexible toolkit for
specifying negative dependence across a wide range of modelling contexts.

Several directions emerge naturally for future work. Chief among these is
the stop-loss ordering of JE aggregates: while the variance of $\sum_{i\in\N}X_i$
under JE exceeds that under ME for fixed marginals, a complete characterisation
of excess-of-loss premiums within the JE class remains open and would be of
direct relevance to reinsurance pricing. Equally important are statistical
questions, including the estimation of face-mass coefficients from data and
the development of goodness-of-fit procedures for JE and G-JE models.
Finally, the homeomorphism between the JE and joint mixability supports
suggests that distributional results in each framework may be transferred to
the other, and exploiting this correspondence in the G-JE and $m$-exclusive
settings is a promising avenue for further unifying the theory of extremal
negative dependence.
%%%%%%%%%%%%%%%%%%%%%%%%%%%%%%%%%%%%%%%%%%%%%%%%%%%%%%
\bibliographystyle{apalike}
\bibliography{References.bib}
%\printbibliography
%\bibliographystyle{apalike}
%\bibliography{References2}
%%%%%%%%%%%%%%%%%%%%%%%%%%%%%%%%%%%%%%%%%%%%%%%%%%%%%%%%%%%%%%%%%%%%%%%%%%%%%%%%%%%%%%%%%%%%%%%%%%%%%
\end{document}